\DeclareFontFamily{OMS}{rsfs}{\skewchar\font'60}
\DeclareFontShape{OMS}{rsfs}{m}{n}{<-5>rsfs5 <5-7>rsfs7 <7->rsfs10 }{}
\DeclareSymbolFont{rsfs}{OMS}{rsfs}{m}{n}
\DeclareSymbolFontAlphabet{\scr}{rsfs}
 \newcommand{\C}{\mathbb{C}}
\newcommand{\sF}{\scr{F}}
\newcommand{\sG}{\scr{G}}
\newcommand{\sJ}{\scr{J}}
\newcommand{\sO}{\scr{O}}
\newcommand{\bN}{\mathbb{N}}
\DeclareMathOperator{\Jet}{Jet}
\DeclareMathOperator{\Hom}{Hom}
\DeclareMathOperator{\Image}{Image}
\DeclareMathOperator{\rank}{rank}
\DeclareMathOperator{\Spec}{Spec}
\newcounter{thisthm}
\newcommand{\ilabel}[1]{\newcounter{#1}\setcounter{thisthm}{\value{thm}}\setcounter{#1}{\value{enumi}}}
\newcommand{\iref}[1]{(\thesection.\the\value{thisthm}.\the\value{#1})}
\theoremstyle{plain}    
\newtheorem{thm}{Theorem}[section]
\newtheorem{defn}[thm]{Definition}
\newtheorem{assumption}[thm]{Assumption} 
\numberwithin{equation}{thm}
\numberwithin{figure}{section}
\theoremstyle{plain}    
\newtheorem{cor}[thm]{Corollary}
\newtheorem{lem}[thm]{Lemma}
\newtheorem{fact}[thm]{Fact}
\theoremstyle{plain}    
\newtheorem{prop}[thm]{Proposition}
\newtheorem{proclaim-special}[thm]{\specialthmname}
\theoremstyle{remark}
\newtheorem{rem}[thm]{Remark}
\newtheorem{warning}[thm]{Warning}
\newtheorem*{claim*}{Claim} 
\newtheorem{notation}[thm]{Notation}
\newtheorem{construction}[thm]{Construction}
\newtheoremstyle{bozont-remark}{3pt}{3pt}%
     {}%         Body font
     {}%         Indent amount (empty = no indent, \parindent = para indent)
     {\it}% Thm head font
     {.}%        Punctuation after thm head
     {.5em}%     Space after thm head (\newline = linebreak)
     {\thmname{#1}\thmnumber{ #2}: \thmnote{\sc #3}}%         Thm head spec
\theoremstyle{bozont-remark}
\def\factor#1.#2.{\left. \raise 2pt\hbox{$#1$} \right/\hskip -2pt\raise
  -2pt\hbox{$#2$}} 
\newlength{\swidth}
\newenvironment{enumerate-p}{
  \begin{enumerate}}
  {\setcounter{equation}{\value{enumi}}\end{enumerate}}
\date{\today}
\author{Stefan Kebekus}
\author{Stavros Kousidis}
\author{Daniel Lohmann}
\thanks{Stefan Kebekus was supported in part by the DFG-Forschergruppe
  ``Classification of Algebraic Surfaces and Compact Complex Manifolds''. Some
  parts of this paper were worked out during a visit of Kebekus to the
  University of Michigan at Ann~Arbor. He would like to thank Mircea Mustaţă
  for the invitation and for a number of discussions. The work on this paper
  was finished while Kebekus visited the 2009 Special Year in Algebraic
  Geometry at the Mathematical Sciences Research Institute, Berkeley. He would
  like to thank the MSRI for support. Daniel Lohmann was supported in part by
  the DFG-Graduiertenkolleg ``Global Structures in Geometry and Analysis''.}
\address{Stefan Kebekus, Mathematisches Institut, Albert-Ludwigs-Universit\"at
  Freiburg, Eckerstraße 1, 79104 Freiburg im Breisgau, Germany}
\email{\href{mailto:stefan.kebekus@math.uni-freiburg.de}{stefan.kebekus@math.uni-freiburg.de}}
\urladdr{\href{http://home.mathematik.uni-freiburg.de/kebekus}{http://home.mathematik.uni-freiburg.de/kebekus}}
\address{Stavros Kousidis, Mathematisches Institut der Universit\"at zu K\"oln, Weyertal 86-90, 50931 K\"oln, Germany}
\email{\href{mailto:skousidi@math.uni-koeln.de}{skousidi@math.uni-koeln.de}}
\urladdr{\href{http://www.mi.uni-koeln.de/~skousidi}{http://www.mi.uni-koeln.de/\~{}skousidi}}
\address{Daniel Lohmann, Mathematisches Institut, Albert-Ludwigs-Universit\"at
  Freiburg, Eckerstraße 1, 79104 Freiburg im Breisgau, Germany}
\email{\href{mailto:daniel.lohmann@math.uni-freiburg.de}{daniel.lohmann@math.uni-freiburg.de}}
\title{Deformations along subsheaves}
\begin{document}

\begin{abstract}
  Let $f : Y \to X$ be a morphism of complex manifolds, and assume that $Y$ is
  compact. Let $\sF \subseteq T_X$ be a subsheaf which is closed under the Lie
  bracket.  This short paper contains an elementary and very geometric
  argument to show that all obstructions to deforming $f$ along the sheaf
  $\sF$ lie in $H^1\bigl( Y,\, \sF_Y \bigr)$, where $\sF_Y \subseteq f^*(T_X)$
  is the image of $f^*(\sF)$ under the pull-back of the inclusion map.

  Special cases of this result include Miyaoka's theory of deformation along a
  foliation, Keel-McKernan's logarithmic deformation theory and deformations
  with fixed points.
\end{abstract}

\maketitle
\tableofcontents

\section{Introduction and main results}

\subsection{Introduction}\label{ssec:intro}

Let $f: Y \to X$ be a morphism of complex manifolds and assume that $Y$ is
compact. We aim to deform $f$, keeping $X$ and $Y$ fixed. More precisely,
given an infinitesimal deformation of $f$, say $\sigma \in H^0\bigl( Y,
f^*(T_X) \bigr)$, we ask if $\sigma$ is effective, i.e., if $\sigma$ comes
from a deformation of $f$.

It is a classical result that any infinitesimal deformation is effective if
the associated obstruction space vanishes. We refer to \cite{MR0322209}, or to
\cite[Chap.~1]{K96} for a thorough discussion of the algebraic case.

\begin{thm}\label{thm:class}
  If $H^1\bigl(Y, f^*(T_X)\bigr) = \{0\}$, then any infinitesimal deformation
  of $f$ is effective.
\end{thm}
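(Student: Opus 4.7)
The plan is to handle the deformation problem in two stages: first, build a formal deformation of $f$ order by order, and then promote it to a genuine holomorphic deformation by invoking the existence of a Douady/Hom space of morphisms. This strategy is classical, but worth laying out carefully, because the main results of the paper will refine exactly this argument in the presence of a bracket-closed subsheaf $\sF \subseteq T_X$.

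For stage one, I would proceed by induction on the order. Set $A_n := \bC[t]/(t^{n+1})$, so the given infinitesimal deformation corresponds to a morphism $f_1 : Y \times \Spec A_1 \to X$ extending $f$, and assume inductively that an extension $f_n : Y \times \Spec A_n \to X$ has been constructed. Cover $X$ by holomorphic coordinate charts $\{U_\alpha\}$ and choose a locally finite cover $\{V_\alpha\}$ of $Y$ by Stein open sets with $f(V_\alpha) \subseteq U_\alpha$. On each $V_\alpha$ one constructs a local lift $g_\alpha : V_\alpha \times \Spec A_{n+1} \to U_\alpha$ of $f_n|_{V_\alpha}$ by a direct polynomial argument, since extending a holomorphic map across a square-zero ideal into a polydisc target is unobstructed. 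On the overlap $V_{\alpha\beta}$ the two lifts $g_\alpha$ and $g_\beta$ differ by $t^n$ times a local section $\tau_{\alpha\beta}$ of $f^*(T_X)$, and $\{\tau_{\alpha\beta}\}$ is readily checked to be a Čech $1$-cocycle whose cohomology class $o(f_n) \in H^1(Y, f^*(T_X))$ is independent of the choices made.

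Under the hypothesis $H^1(Y, f^*(T_X)) = 0$, the class $o(f_n)$ vanishes, so the $g_\alpha$ can be corrected by local sections of $f^*(T_X)$ to produce a global extension $f_{n+1}$. Iterating yields a compatible system $\{f_n\}_{n \geq 1}$, i.e., a formal deformation of $f$ over $\Spec \bC[[t]]$. The main obstacle, and in my view the only nonroutine step, is the passage from formal to analytic: a formal solution need not a priori arise from an actual holomorphic one-parameter family of maps. Here I would invoke the fact that since $Y$ is compact, the Douady space $\Hom(Y, X)$ exists as a complex analytic space whose Zariski tangent space at $[f]$ is $H^0(Y, f^*(T_X))$. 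The obstruction-theoretic computation just sketched shows that $\Hom(Y, X)$ is formally smooth at $[f]$, hence smooth; consequently the tangent vector $\sigma$ is realized by a genuine holomorphic arc through $[f]$, which is precisely the desired effective deformation of $f$.
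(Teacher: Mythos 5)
Your stage-one argument is essentially the paper's Lemma~\ref{lem:liftnnpo} in different clothing: by Proposition~\ref{prop:infdefn}, a morphism $Y \times \Spec A_n \to X$ extending $f$ is the same datum as a section $Y \to f^*\Jet^n(X)$, local lifts exist by local triviality, and the differences on overlaps form a \v{C}ech $1$-cocycle with values in $f^*(T_X)$ whose class obstructs gluing. (Minor indexing slip: with $A_n = \bC[t]/(t^{n+1})$, two lifts of an $A_n$-map to $A_{n+1}$ agree modulo $t^{n+1}$ and so differ by $t^{n+1}$ times a section, not $t^n$.) Where you genuinely diverge from the paper is in the passage from formal to convergent. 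The paper takes the formal arc $\Spec \bC[[\varepsilon]] \to \Hom(Y,X)$ produced by the induction, writes a neighborhood of $[f]$ as an analytic subset of $\bA^N$ cut out by equations, and applies Artin's approximation theorem \cite[Thm.~1.2]{ARTIN68} to replace the formal solution by a holomorphic arc agreeing with $\sigma$ to first order. You instead assert that $\Hom(Y,X)$ is formally smooth, hence smooth, at $[f]$, and then integrate the tangent vector on a smooth germ. Your route, when completed, is arguably cleaner in that it yields smoothness of the Douady space as a by-product; the paper's route has the advantage of needing only the curvilinear liftings you actually constructed.

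The one genuine gap is the sentence ``the obstruction-theoretic computation just sketched shows that $\Hom(Y,X)$ is formally smooth at $[f]$.'' What you have shown is only that liftings exist along the curvilinear extensions $A_{n+1} \to A_n$; formal smoothness requires surjectivity of $h_{\Hom(Y,X)}(B) \to h_{\Hom(Y,X)}(A)$ for \emph{every} small extension $0 \to J \to B \to A \to 0$ of local Artin $\bC$-algebras, and curvilinear liftability is a priori weaker (compare $\bC[[x,y]]/(x^2-y^3)$, where particular arcs such as $t \mapsto (t^3,t^2)$ lift to all orders even though the germ is singular --- one must test all small extensions, or at least all arcs, to detect this). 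The fix is routine: run the identical \v{C}ech argument for an arbitrary small extension, placing the obstruction in $H^1\bigl(Y, f^*(T_X)\bigr) \otimes_{\bC} J$, which again vanishes by hypothesis. Alternatively, drop the smoothness claim altogether and apply Artin's theorem directly to your formal arc, as the paper does; either repair makes the argument complete.
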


Theorem~\ref{thm:class} is not sharp though. There are many examples of
infinitesimal deformations that are effective even though $h^1\bigl(Y,
f^*(T_X)\bigr)$ is large. In these cases, it is often possible to find a
geometric reason that explains the behavior.  Here, we consider the geometric
context where there is a subsheaf $\sF \subseteq T_X$, and where $\sigma \in
H^0 \bigl( Y, f^*(T_X)\bigr)$ is an infinitesimal deformation along $\sF$,
i.e., where $\sigma$ is in the image of the natural map
$$
H^0 \bigl( Y, f^*(\sF)\bigr) \to H^0 \bigl( Y, f^*(T_X)\bigr)
$$
If $\sF$ is closed under the Lie bracket, we show that an analogue of
Theorem~\ref{thm:class} holds for deformations along $\sF$.

The proof of our main result, Theorem~\ref{thm:mainresult}, is completely
elementary and does not use any of the sophisticated methods of deformation
theory.  The methods also illustrate the proof of Theorem~\ref{thm:class}.

\subsection{Main result}\label{ssec:mainthm}

In order to formulate the main results precisely in
Theorem~\ref{thm:mainresult} below, recall a few standard definitions and
notation used in the discussion of deformations.

\begin{defn}\label{def:defn}
  A \emph{deformation of $f$} is a holomorphic mapping $F : \Delta \times Y
  \to X$ whose restriction to $\{0\}\times Y \cong Y$ equals $f$. Here $\Delta
  \subset \mathbb C$ is a disk centered about $0$.
\end{defn}

\begin{notation}\label{not:infdef}
  If $F$ is a deformation and $t \in \Delta$ any number, we often write $F_t :
  Y \to X$ for the obvious restriction of $F$ to $\{t\}\times Y \cong
  Y$. Given a point $y \in Y$, we can consider the curve
  $$
  F_y : \Delta \to X, \quad t \mapsto F(t,y).
  $$
  Given $t \in \Delta$ and taking derivatives in $t$ for all $y$, this gives a
  section
  $$
  \sigma_{F, t} \in H^0\bigl( Y,\, (F_t)^*(T_X)\bigr),
  $$
  called ``velocity vector field at time $t$''. For $t = 0$, we obtain a
  section $\sigma_{F,0} \in H^0\bigl(Y, f^*(T_X)\bigr)$. Elements of
  $H^0\bigl(Y, f^*(T_X)\bigr)$ are thus called ``initial velocity vector
  fields'' or ``first order infinitesimal deformations of $f$''.
\end{notation}

\begin{defn}
  A first order infinitesimal deformation $\sigma \in H^0\bigl(Y,
  f^*(T_X)\bigr)$ is ``effective'' if there exists a deformation $F$ with
  $\sigma = \sigma_{F,0}$.
\end{defn}

With this notation, the main result of the present paper is formulated as
follows.

\begin{thm}[Deformation along an involutive subsheaf]\label{thm:mainresult}
  Let $f: Y \to X$ be a morphism of complex manifolds and assume that $Y$ is
  compact. Let $\sF \subseteq T_X$ be a subsheaf of $\sO_X$-modules which is
  closed under the Lie bracket, let $\sF_Y \subseteq f^*(T_X)$ be the image of
  $f^*(\sF)$ under the pull-back of the inclusion map, and let
  $$
  \sigma \in H^0 \bigl( Y,\, \sF_Y \bigr) \subseteq H^0 \bigl( Y,\, f^*(T_X)
  \bigr)
  $$
  be a first order infinitesimal deformation of the morphism $f$ that comes
  from $\sF$.

  If $H^1\bigl( Y,\, \sF_Y \bigr)= \{0\}$, then there exists a deformation $F$
  of $f$ such that $\sigma = \sigma_{F,0}$, and such that for all times $t \in
  \Delta$ the section $\sigma_{F,t}$ is in the image of
  \begin{equation}\label{eq:dalngF}
    H^0 \bigl( Y, (F_t)^*(\sF)\bigr) \to H^0 \bigl( Y, (F_t)^*(T_X)\bigr)    
  \end{equation}
\end{thm}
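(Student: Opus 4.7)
The plan is to construct $F$ by combining a local flow construction, which uses involutivity of $\sF$ to keep velocities inside $\sF$, with an inductive Čech-type gluing argument whose obstructions at each step live in $H^1(Y,\sF_Y)$ and therefore vanish by hypothesis.

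First I would reduce to local data. Cover $Y$ by finitely many open sets $U_i$ together with open neighborhoods $V_i\supseteq f(U_i)$ in $X$ on which $\sF$ is generated by finitely many holomorphic vector fields $w_{i,1},\dots,w_{i,r_i}$. After shrinking the $U_i$, lift $\sigma|_{U_i}$ to a section $\tilde\sigma_i\in H^0(U_i,f^*\sF)$; writing $\tilde\sigma_i=\sum_k a_{ik}\cdot f^*w_{ik}$, define a $U_i$-parametrized holomorphic family of sections of $\sF|_{V_i}$ by $v_i(y,\cdot):=\sum_k a_{ik}(y)\,w_{ik}$, and let $F_i(t,y):=\phi^t_{v_i(y,\cdot)}(f(y))$ be the flow of $v_i(y,\cdot)$ started at $f(y)$. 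On a sufficiently small disk $\Delta_i$ this is a well-defined holomorphic map with $F_i(0,\cdot)=f|_{U_i}$ and $\partial_t F_i|_{t=0}=\sigma|_{U_i}$, and the velocity $\partial_t F_i(t,y)=v_i(y,F_i(t,y))$ tautologically lifts to a local section of $F_i^*\sF$ at every time~$t$.

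Next I would glue order by order in~$t$. Suppose inductively that local deformations $F_i^{(n)}$ with the properties above have been produced that agree on overlaps modulo $t^{n+1}$. The order-$(n+1)$ coefficients of $F_i^{(n)}-F_j^{(n)}$ assemble into a Čech $1$-cochain for the cover $\{U_i\}$ whose values, crucially, lie in $\sF_Y$ and not merely in $f^*T_X$: this is the single place where involutivity of $\sF$ enters, since it ensures that compositions and conjugations of flows of sections of $\sF$ remain flows along~$\sF$, so all discrepancies are parametrized by sections of $\sF_Y$. Because $H^1(Y,\sF_Y)=0$, this cocycle is a coboundary $c_i-c_j$ with $c_i\in H^0(U_i,\sF_Y)$, and composing $F_i^{(n)}$ with the flow of a local lift of $c_i$ to $f^*\sF|_{U_i}$, itself a section of $\sF$ by involutivity, produces $F_i^{(n+1)}$ agreeing on overlaps modulo $t^{n+2}$, keeping the correct initial velocity and keeping the velocity-in-$\sF_Y$ property at every~$t$.

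Finally, convergence on a common disk $\Delta$ is obtained by finite-cover estimates parallel to those used in the classical Theorem~\ref{thm:class}, and the compatible local velocity lifts assemble to a global section of $F_t^*\sF$ mapping to $\sigma_{F,t}$, yielding the last assertion of the theorem. The main obstacle, and the place where both hypotheses enter decisively, is the inductive step above: identifying the order-$(n+1)$ discrepancy as a cocycle in $\sF_Y$ rather than in the larger $f^*T_X$, where involutivity is essential, and then correcting it without destroying the velocity-in-$\sF_Y$ condition. Without involutivity one would only obtain a cocycle in $f^*T_X$, recovering Theorem~\ref{thm:class} in place of Theorem~\ref{thm:mainresult}.
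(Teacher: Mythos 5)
Your overall strategy --- local flows along $\sF$, followed by an order-by-order \v{C}ech correction whose obstruction lies in $H^1(Y,\sF_Y)$ --- is the same as the paper's, and you correctly locate the role of involutivity. But the steps you merely assert are precisely the ones carrying the technical weight, and as written two of them fail. First, the claim that the order-$(n+1)$ discrepancy of two local flows agreeing to order $n$ is a section of $\sF_Y$ is not a formal consequence of ``compositions of flows along $\sF$ stay along $\sF$''; it requires the identity $\tau_{D_2}^{n+1}(x)-\tau_{D_1}^{n+1}(x)=[D_1,D_2]^{(n+1)}|_x$ for jets of integral curves (Theorem~\ref{thm:dj} in the paper), and only then does involutivity place the cochain in $\sF_Y$. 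Second, your correction step is wrong as stated: composing $F_i^{(n)}$ with the flow of a time-independent lift of $c_i$ alters the deformation at \emph{every} order in $t$, in particular at order one, so it destroys the initial velocity and the agreement already achieved at orders $\le n$. To shift only the $(n+1)$st Taylor coefficient you must flow along something like $\frac{t^n}{n!}\,c_i$, i.e.\ a time-dependent vector field; this is exactly why the paper replaces $X$ by $Z=X\times\C$ and works with vector fields of the form $D'+\frac{d}{dt}$ with $D'$ time dependent (note that after this replacement $H^1$ of the relevant sheaf $\sG_Y=\sF_Y\oplus\sO_Y$ need not vanish, so the argument must exploit the special form of the cocycles rather than a blanket vanishing).

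Third, and most seriously, the passage from the formal solution to an actual deformation is not a matter of ``finite-cover estimates parallel to Theorem~\ref{thm:class}'': the paper's proof of Theorem~\ref{thm:class} uses Artin's approximation theorem, which produces a convergent deformation agreeing with the formal one only to some finite order. Such a deformation need not inherit the property that $\sigma_{F,t}$ lies in the image of $H^0\bigl(Y,(F_t)^*(\sF)\bigr)$ for all $t$, so your final sentence does not follow. The paper closes this gap by constructing a locally closed analytic subspace $\Hom_{\sF}(Y,X)\subseteq\Hom(Y,X)$ (Corollary~\ref{cor:analyticset}), built from a generalized Frobenius theorem for the stratification of $X$ by the rank of $\sF\to T_X$, such that every formal lifting factors through it and every convergent arc in it is automatically a deformation along $\sF$; Artin's theorem is then applied relative to this subspace. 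Without this (or a genuine majorant-series convergence proof, which you do not supply), the conclusion for $t\neq 0$ is not established.
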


\begin{notation}\label{not:defalongf}
  If $F$ is any deformation of $f$ such that \eqref{eq:dalngF} holds for all
  $t$, we say that $F$ is a deformation ``along the sheaf $\sF$''.
\end{notation}

\begin{rem}
  The subsheaf $\sF \subseteq T_X$ need not be a foliation because $\sF$ need
  not be saturated in $T_X$. We recall a few special cases of
  Theorem~\ref{thm:mainresult} that we have found in the literature.
  \begin{enumerate}
  \item Foliations: The case where $\sF$ is an algebraic foliation is studied
    in Miyaoka's theory of deformation along an algebraically defined
    foliation, \cite{Miy85, MP97}.
  
  \item Logarithmic tangent sheaves: The case where $X$ contains a reduced
    divisor $D$ and $\sF = T_X(-\log D)$ appears in Keel and McKernan's work
    on the Miyanishi conjecture, \cite[Sect.~5]{KMcK}.

  \item Deformation with fixed points: A variant of the case where $\sF = T_X
    \otimes \sJ_p$ is the tangent bundle twisted with the ideal sheaf of a
    point $p$ is used in Mori's Bend-and-Break technique.
  \end{enumerate}
\end{rem}

\subsection{Outline of the paper}

In Section~\ref{sec:def}, we recall the definition of jet bundles on a complex
manifold $X$ and recall their main properties. The language of jets makes it
easy to discuss $n$th order deformations of a given morphism, and gives an
elementary way to construct classes in $H^1\bigl( Y,\, f^*(T_X)\bigr)$ that
are obstructions to extending $n$th order deformations to $n+1$st order. We
illustrate these concepts by reproducing Horikawa's proof of
Theorem~\ref{thm:class} in the language of jets, referring to Artin's paper
\cite{ARTIN68} for the necessary convergence results.

In Section~\ref{sec:strategy}, we outline the proof of
Theorem~\ref{thm:mainresult}, explain the main strategy and motivate two sets
of problems which are discussed in Sections~\ref{sec:ODEJets} and
\ref{sec:genlFrob} before completing the proof of Theorem~\ref{thm:mainresult}
in Section~\ref{sec:proof}.

Section~\ref{sec:ODEJets} concerns the relation between vector fields and
higher order jets of the integral curves they define. Given two vector fields
$D_1$ and $D_2$ on $X$ with integral curves $\gamma_1$ and $\gamma_2$, we
are interested in expressing the difference of higher order terms in the power
series expansions of the $\gamma_i$ in terms of iterated Lie brackets
involving $D_1$ and $D_2$.

In Section~\ref{sec:genlFrob}, we discuss an elementary generalization of the
classical Frobenius Theorem of Differential Geometry, where the Lie-closed
subsheaf $\sF \subseteq T_X$ is not necessarily a foliation. This will allow
us to construct local analytic subspaces of the Douady space $\Hom(Y,X)$ which
locally parametrize deformations along the sheaf $\sF$.

\subsection{Acknowledgments}

A first version of Theorem~\ref{thm:mainresult} and the elementary proof of
Theorem~\ref{thm:class} are contained in the diploma thesis of Daniel Lohmann
and Stavros Kousidis, respectively. Both thesis projects were supervised by
Stefan Kebekus.

The authors would like to thank Guido Kings for discussions, and the anonymous
\mbox{referee} for a careful review, for suggestions that helped to remove a
projectivity assumption, and for bringing Horikawa's paper to our attention.

\section{Jet bundles and deformations of morphisms}
\label{sec:def}

In Sections~\ref{ssec:tgt}--\ref{ssec:affine} we recall the definition and
briefly discuss the main properties of jet bundles of a complex manifold,
which are higher order generalizations of the tangent bundle. Jet bundles are
then used in Section~\ref{ssec:idef} to describe higher order infinitesimal
deformations of morphisms. To illustrate the use of jets in deformation
theory, we end this chapter with a short and very transparent proof of the
classical Theorem~\ref{thm:class}.

\begin{rem}
  There are two notions of ``jet bundle'' found in the literature. In this
  paper, an ``$n$-jet'' is an $n$th order curve germ. This notion was,
  originally introduced in slightly higher generality in real geometry by
  Ehresmann, cf.~\cite[Chapt.~6.29C]{MR947141}.

  Other authors use the word ``$n$-jet'' to denote an $n$th order germ of a
  section in a given vector bundle. This notion is found, e.g., in the work of
  Kumpera-Spencer on Lie equations, \cite[Chap.~1]{KS72}.
\end{rem}

\begin{notation}
  If $X$ and $Y$ are any two complex spaces where $Y$ is compact, we denote
  the Douady space of morphisms from $Y$ to $X$ by $\Hom \bigl(Y,\, X
  \bigr)$. Like the Hom-scheme of algebraic geometry, the Douady space of
  morphisms represents a functor and is therefore uniquely determined by its
  universal properties. We refer to \cite[Sect.~2]{MR1326625} for a brief
  overview and for further references.

  The reader willing to restrict himself to algebraic morphisms of projective
  varieties is free to use the Hom-scheme instead of the Douady space
  throughout this paper.
\end{notation}

\subsection{Tangent bundles}
\label{ssec:tgt}

Let $X$ be a complex manifold. Before discussing jet bundles of arbitrary
order, we recall two equivalent standard constructions of the tangent bundles
for the reader's convenience.

\begin{construction}\label{con:Tx1}
  As a manifold, the tangent bundle $T_X$ is the set of equivalence classes of
  germs of arcs $\Delta \to X$, under the equivalence relation that $\tau \sim
  \sigma$ if they agree to first order. Coordinate charts on $X$ induce
  coordinate charts on $T_X$ in the obvious canonical manner, and the map
  $\tau \mapsto \tau(0)$ induces a canonical morphism $\pi : T_X \to X$.
\end{construction}

\begin{construction}\label{con:Tx2}
  As a complex space or scheme, the tangent bundle is defined as $T_X := \Hom
  \bigl(\Spec \mathbb C[\varepsilon]/(\varepsilon^2),\, X \bigr)$, where
  $\Spec \mathbb C[\varepsilon]/(\varepsilon^2)$ denotes the double point on
  the affine line. The obvious map $\mathbb C[\varepsilon]/(\varepsilon^2) \to
  \mathbb C$ induces a canonical morphism $\pi : T_X \to X = \Hom(\Spec
  \mathbb C,\, X)$.
\end{construction}

Using either construction, an elementary computation immediately gives the
following.

\begin{fact}\label{fact:TX}
  The tangent bundle $T_X$ of a complex manifold $X$ has the structure of a
  vector bundle over $X$.

  Local coordinates on $U \subseteq X$ induce vector bundle coordinates on
  $\pi^{-1}(U) \subseteq T_X$.  More precisely, if $U \subseteq X$ is a
  coordinate neighborhood, and $\gamma$ is a germ of an arc $\gamma: \Delta
  \to U$, described in $U$-coordinates as
  $$
  \gamma(t) = \vec x_0 + \vec x_1 \cdot t + \text{(higher order terms)},
  $$
  then the associated point of $T_X$ has $\pi^{-1}(U)$-coordinates
  $(\vec x_0, \vec x_1) \in U \times \mathbb C^{\dim X}$. \qed
\end{fact}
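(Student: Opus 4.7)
The plan is to work with Construction~\ref{con:Tx1} and verify the vector bundle structure directly in local coordinates, using the chain rule to see that the transition functions act $\mathbb{C}$-linearly on the fibers. The coordinate description in the statement will then drop out automatically from the construction, so both assertions of the Fact are handled simultaneously.

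First, I would fix a coordinate chart $U \subseteq X$ with holomorphic coordinates $\vec x = (x_1, \dots, x_n)$, $n = \dim X$, and define the map $\pi^{-1}(U) \to U \times \mathbb{C}^n$ by sending the equivalence class of an arc germ $\gamma(t) = \vec x_0 + \vec x_1 \cdot t + O(t^2)$ to the pair $(\vec x_0, \vec x_1)$. Injectivity is immediate from the definition of ``agreeing to first order,'' and surjectivity follows because for any prescribed $(\vec x_0, \vec x_1)$ the affine arc $t \mapsto \vec x_0 + \vec x_1 \cdot t$ lies in $U$ for sufficiently small $t$ and represents the class in question. This gives a canonical bijective trivialization of $\pi^{-1}(U)$, compatible with the projection $\pi$.

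Second, I would verify that these trivializations patch linearly on overlaps. If $V \subseteq X$ is a second chart with coordinates $\vec y$ and holomorphic transition map $\vec y = \phi(\vec x)$ on $U \cap V$, the chain rule gives
$$
\phi\bigl(\vec x_0 + \vec x_1 \cdot t + O(t^2)\bigr) = \phi(\vec x_0) + D\phi(\vec x_0)(\vec x_1) \cdot t + O(t^2).
$$
Hence the induced transition on $\pi^{-1}(U\cap V)$ is $(\vec x_0, \vec x_1) \mapsto \bigl(\phi(\vec x_0),\, D\phi(\vec x_0)\vec x_1\bigr)$, which is holomorphic on the base and $\mathbb{C}$-linear in the fiber variable. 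The cocycle condition for the matrices $D\phi$ is just the chain rule for a composition of three charts, so the collection of trivializations equips $T_X$ with the structure of a rank-$n$ holomorphic vector bundle, and the local description asserted in the Fact is exactly the trivialization constructed above.

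There is no real obstacle here; the only subtlety worth flagging is that the equivalence relation ``agreement to first order,'' formulated in a specific chart in Construction~\ref{con:Tx1}, is intrinsically well-defined. But this is precisely what the chain rule computation above shows: if two arcs have the same zeroth- and first-order terms in the $\vec x$-coordinates on $U$, they have the same zeroth- and first-order terms in any other holomorphic coordinates, since the latter are obtained from the former by the invertible linear map $D\phi(\vec x_0)$. With this checked, the Fact follows.
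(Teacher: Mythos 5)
Your argument is correct and is precisely the ``elementary computation'' that the paper invokes but omits: the paper states this Fact with no written proof, asserting that it follows immediately from either construction of $T_X$. Your chain-rule verification of the linear transition functions, the cocycle condition, and the chart-independence of first-order agreement fills in exactly the intended details, with no gaps.
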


\subsection{Jet bundles}
\label{ssec:jetbundles}

In complete analogy with Constructions~\ref{con:Tx1}--\ref{con:Tx2}, the jet
bundle of a complex manifold $X$ can be defined in one of the following
equivalent ways.

\begin{construction}
  As a manifold, the $n$th jet bundle $\Jet^n(X)$ is the set of equivalence
  classes of germs of arcs $\Delta \to X$, under the equivalence relation that
  $\tau \sim \sigma$ if they agree to $n$th order. Coordinate charts on $X$
  induce coordinate charts on $\Jet^n(X)$ in the obvious canonical manner, and
  for any $m \leq n$ the restriction of arcs to $m$th order induces a
  canonical morphism $\pi_{n,m} : \Jet^n(X) \to \Jet^m(X)$.
\end{construction}

\begin{construction}
  As a complex space or scheme the $n$th jet bundle is defined as $\Jet^n(X)
  := \Hom \bigl(\Spec \mathbb C[\varepsilon]/(\varepsilon^{n+1}),\, X
  \bigr)$. For $m \leq n$, the truncation map $\mathbb
  C[\varepsilon]/(\varepsilon^{n+1}) \to \mathbb
  C[\varepsilon]/(\varepsilon^{m+1})$ induces a canonical morphism $\pi_{n,m}
  : \Jet^n(X) \to \Jet^m(X)$.
 \end{construction}

It is clear from the construction that $\Jet^0(X) \cong X$ and $\Jet^1(X)
\cong T_X$. In complete analogy with Fact~\ref{fact:TX}, an elementary
computation in local coordinates shows the following.

\begin{fact}\label{fact:jbmp1}
  Let $X$ be a complex manifold and let $m \leq n$ be any two numbers. Then
  the following holds.
  \begin{enumerate}
  \item The morphisms $\pi_{n,m} : \Jet^n(X) \to \Jet^m(X)$ are fiber bundles,
    locally trivial in Zariski topology with fibers isomorphic to $\mathbb
    A^{(n-m)\cdot\dim X}$. In general, the transition maps are neither linear
    nor affine, and $\pi_{n,m}$ is generally neither a vector bundle nor
    an affine bundle.
    
  \item Local coordinates on $U \subseteq X$ induce vector bundle coordinates
    on $\pi_{n,0}^{-1}(U) \subseteq \Jet^n(X)$, for all $n$.  More precisely,
    if $U \subseteq X$ is a coordinate neighborhood, and $\gamma$ is a germ of
    an arc $\gamma: \Delta \to U$, described in $U$-coordinates as
    $$
    \gamma(t) = \vec v_0 + \vec v_1 \cdot t + \cdots + \vec v_n \cdot
    t^n + \text{(higher order terms)},
    $$
    then the associated point of $\Jet^n(X)$ has
    $\pi_{n,0}^{-1}(U)$-coordinates $(\vec x_0, \vec x_1, \vec x_2,
    \ldots, \vec x_n) \in U \times \mathbb C^{n\cdot \dim X}$, with
    $\vec x_i = i!\cdot \vec v_i$. In particular, the coordinate $\vec
    x_i$ is computed in local coordinates as the $i$th derivative,
    $\vec x_i = \gamma^{(i)}(0)$.
    
  \item If $m=n-1$, the fiber bundle $\pi_{m+1,m} : \Jet^{m+1}(X) \to
    \Jet^{m}(X)$ has affine transition maps and is therefore an affine
    bundle. \qed
  \end{enumerate}
\end{fact}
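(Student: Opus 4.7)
The plan is an elementary local-coordinate computation; no input beyond Constructions~2.2--2.3 is needed, and the whole argument is essentially Faà di Bruno in disguise.

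First I set up trivializations, which will simultaneously prove part~(2) and the local triviality claim in~(1). Fix a coordinate neighborhood $U \subseteq X$ with coordinates $(z_1, \ldots, z_d)$, $d = \dim X$. Any arc germ $\gamma: \Delta \to U$ has a unique Taylor expansion $\gamma(t) = \vec{v}_0 + \vec{v}_1 t + \cdots + \vec{v}_n t^n + O(t^{n+1})$, and two such arcs define the same $n$-jet precisely when their first $n+1$ coefficients coincide. This identifies $\pi_{n,0}^{-1}(U)$ as a set with $U \times \mathbb{C}^{nd}$; rewriting via $\vec{x}_i := \gamma^{(i)}(0) = i!\, \vec{v}_i$ gives the coordinates claimed in~(2). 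Restricting to the first $m+1$ coefficients realizes $\pi_{n,m}$ locally as the projection $\pi_{m,0}^{-1}(U) \times \mathbb{C}^{(n-m)d} \to \pi_{m,0}^{-1}(U)$, which is the fiber-bundle claim in~(1).

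Next I compute transition functions. Let $\phi: U \to U'$ be a biholomorphism between two charts. Under the induced map $\gamma \mapsto \tilde\gamma := \phi \circ \gamma$, Faà di Bruno's formula yields, for each $k \geq 1$,
$$
\tilde{\vec{x}}_k \;=\; d\phi_{\vec{x}_0}\bigl(\vec{x}_k\bigr) \;+\; P_k\bigl(\vec{x}_0,\vec{x}_1,\ldots,\vec{x}_{k-1}\bigr),
$$
where $P_k$ is a universal polynomial in the partial derivatives of $\phi$ at $\vec{x}_0$ of order at most $k$ and in the lower jet coordinates $\vec{x}_1, \ldots, \vec{x}_{k-1}$. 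For generic $\phi$ the polynomial $P_k$ has total degree greater than $1$ in the fiber variables (already a term of the form $k\, d^2\phi(\vec{x}_1, \vec{x}_{k-1})$ appears once $k \geq 2$), so the combined transition map on $\pi_{n,0}^{-1}(U)$ is polynomial but in general neither linear nor affine as soon as $n \geq 2$, completing~(1). For part~(3) I fix $(\vec{x}_0, \ldots, \vec{x}_{n-1})$; the fiber of $\pi_{n,n-1}$ is then parametrized by the single variable $\vec{x}_n$, and the displayed formula specialized to $k = n$ reads $\tilde{\vec{x}}_n = d\phi_{\vec{x}_0}(\vec{x}_n) + P_n(\vec{x}_0, \ldots, \vec{x}_{n-1})$, which is affine in $\vec{x}_n$ with linear part $d\phi_{\vec{x}_0}$ and translation depending only on the fixed base point. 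This is exactly the cocycle condition for an affine-bundle structure.

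There is no genuine obstacle in this argument: the only real work is recording Faà di Bruno in a form that cleanly separates the linear top-order action $d\phi(\vec{x}_k)$ from the polynomial correction $P_k$ in the strictly lower jets. Everything else reduces to bookkeeping with multi-indices.
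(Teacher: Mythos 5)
Your proposal is correct and is exactly the ``elementary computation in local coordinates'' that the paper invokes (and omits) when stating this Fact with a \verb|\qed|: Taylor coefficients give the trivializations of part~(2), and Fa\`a di Bruno isolates the linear top-order term $d\phi_{\vec x_0}(\vec x_k)$ from a correction depending only on strictly lower jets, which yields both the non-affineness in general and the affine-bundle structure of $\pi_{n,n-1}$. No substantive difference from the paper's intended argument.
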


\subsection{Affine bundles associated with jets}
\label{ssec:affine}

We need to discuss the affine bundle structure of $\Jet^n(X) \to
\Jet^{n-1}(X)$ in more detail. For that, we briefly recall the relevant
properties of affine spaces.

\medskip

By definition, any affine space $A$ comes with a canonical vector space $V$,
the ``space of translations'', whose additive group $V$ acts on $A$. The
action map, often called ``translation map'' is usually denoted as follows,
$$
+ : V \times A \to A, \quad (\vec v, a) \mapsto \vec v + a.
$$
Given any $a \in A$, the natural map $V \to A$, $\vec v \mapsto \vec v+a$ is
an isomorphism of complex manifolds. Consequently, given any two elements $a,
b \in A$, there is a uniquely defined ``difference vector'' $\vec v \in V$,
often denoted as $\vec v = a-b$, such that $\vec v + b = a$.

In complete analogy, any affine bundle $A \to B$ naturally comes with a vector
bundle $\pi: V \to B$, the ``bundle of translations''. The translation maps on
fibers glue to give a translation map
$$
+ : V \times_B A \to A.
$$
Given any section $\sigma: B \to A$, the natural map $V \to A$, $\vec v
\mapsto \vec v + \sigma(\pi(\vec v))$ is a fiber bundle isomorphism. Consequently, given
any two sections $\sigma_1, \sigma_2: B \to A$, there is a uniquely defined
``difference section'', $\tau : B \to V$, often denoted as $\tau =
\sigma_1-\sigma_2$, such that $\tau(b) + \sigma_2(b) = \sigma_1(b)$ for all $b
\in B$.

\medskip

For the affine bundle $\Jet^{n+1}(X) \to \Jet^n(X)$, the elementary
computation used to prove Fact~\ref{fact:jbmp2} immediately identifies
the translation bundle.

\begin{fact}\label{fact:jbmp2}
  Let $X$ be a complex manifold and let $n \geq 0$ be any number. Then the
  vector bundle $V_n$ of translations associated with the affine bundle
  $\Jet^{n+1}(X) \to \Jet^n(X)$ is precisely the pull-back of the vector
  bundle $T_X$ to $\Jet^n(X)$. In other words, $V_n = \pi_{n,0}^*(T_X)$. \qed
\end{fact}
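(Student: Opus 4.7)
The plan is an elementary check in local coordinates, using the coordinate description from Fact~\ref{fact:jbmp1} and the chain rule. The claim reduces to verifying that the transition cocycle of the affine bundle $\Jet^{n+1}(X) \to \Jet^n(X)$ acts on fiber differences by the Jacobian of the coordinate change on $X$, since this is precisely the transition cocycle of $\pi_{n,0}^*(T_X)$.

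Concretely, I would first fix a coordinate neighborhood $U \subseteq X$ and trivialize $\pi_{n+1,0}^{-1}(U) \subseteq \Jet^{n+1}(X)$ by the coordinates $(\vec x_0, \vec x_1, \ldots, \vec x_{n+1})$ provided by Fact~\ref{fact:jbmp1}. In this trivialization the map $\pi_{n+1,n}$ simply forgets the last component, so a fiber over $(\vec x_0, \ldots, \vec x_n)$ is parameterized by $\vec x_{n+1} \in \mathbb{C}^{\dim X}$. The natural action of $\mathbb{C}^{\dim X}$ by translation in the $\vec x_{n+1}$-slot gives the affine-bundle structure, and for two jets $\tau^{(1)}, \tau^{(2)}$ in the same fiber the difference vector is simply
$$
\tau^{(1)} - \tau^{(2)} \;=\; \vec x_{n+1}^{(1)} - \vec x_{n+1}^{(2)} \;\in\; \mathbb{C}^{\dim X}.
$$

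Next, I would determine the behaviour under a biholomorphism $\varphi : U \to U'$ between two coordinate charts on $X$. By the Fa\`a di Bruno formula applied to $\varphi \circ \gamma$, the top-order coordinate transforms as
$$
(\varphi \circ \gamma)^{(n+1)}(0) \;=\; D\varphi(\vec x_0) \cdot \vec x_{n+1} \;+\; R\bigl(\vec x_0, \vec x_1, \ldots, \vec x_n\bigr),
$$
where the remainder $R$ depends only on the lower-order coordinates. For two $(n+1)$-jets lying over the same $n$-jet the remainder $R$ is identical in both and cancels in the difference, so the transformation law of the translation vectors is exactly multiplication by the Jacobian $D\varphi(\vec x_0)$.

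This Jacobian cocycle is, by definition, the transition cocycle of $T_X$ evaluated at the base point $\vec x_0 = \pi_{n,0}(\tau)$; equivalently, it is the cocycle of the pull-back bundle $\pi_{n,0}^*(T_X)$ over $\Jet^n(X)$. Hence $V_n$ and $\pi_{n,0}^*(T_X)$ are defined by the same cocycle on the cover $\{\pi_{n,0}^{-1}(U)\}$ and are therefore canonically isomorphic as vector bundles over $\Jet^n(X)$. The only real obstacle is the bookkeeping of the Fa\`a di Bruno expansion, but the essential observation is elementary and entirely drives the result: only the linear term of $\varphi$ contributes to the top-order difference, because the remainder $R$ is independent of $\vec x_{n+1}$ and therefore washes out in the difference of two jets lying over the same base point.
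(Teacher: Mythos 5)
Your argument is correct and is exactly the ``elementary computation in local coordinates'' that the paper invokes (it leaves Fact~\ref{fact:jbmp2} as an immediate consequence of the coordinate description in Fact~\ref{fact:jbmp1}, with no written proof). The key observation — that by Fa\`a di Bruno the top-order coordinate of $\varphi\circ\gamma$ is $D\varphi(\vec x_0)\cdot\vec x_{n+1}$ plus a remainder depending only on lower-order data, so differences of jets over the same base transform by the Jacobian cocycle of $T_X$ — is precisely the intended reasoning.
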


In the setup of Fact~\ref{fact:jbmp2}, if $\sigma_1, \sigma_2 : X \to
\Jet^{n+1}(X)$ are two sections that agree to $n$th order,
$\pi_{n+1,n}\circ \sigma_1 = \pi_{n+1,n}\circ \sigma_2$, then the difference
is given by a section $\sigma_1 - \sigma_2 \in H^0\bigl( X,\, T_X \bigr)$. We
will later need the following elementary generalization of this fact.

\begin{rem}\label{rem:pbdiff}
  If $f: Y \to X$ is a morphism of complex manifolds and if $\sigma_1,
  \sigma_2: Y \to f^* \Jet^{n+1}(X) = \Jet^{n+1}(X) \times_X Y$ are two
  sections in the pull-back bundles that agree to $n$th order,
  $f^*(\pi_{n+1,n})\circ \sigma_1 = f^*(\pi_{n+1,n}) \circ \sigma_2$, then the
  difference is given by a section $\sigma_1 - \sigma_2 \in H^0\bigl( Y,\,
  f^*(T_X) \bigr)$.
\end{rem}

We end this section with a remark that shows how to compute the
difference of jets in local coordinates. The (easy) proof is again
left to the reader.

\begin{rem}\label{rem:diffincoo}
  If $U \subseteq X$ is a coordinate neighborhood, and if $\gamma_1$,
  $\gamma_2 \in \Jet^{n+1}(X)$ are two jets with $\pi_{n+1,n}(\gamma_1) =
  \pi_{n+1,n}(\gamma_2)$, represented in the induced coordinates on
  $\pi_{n+1,0}^{-1}(U) \subseteq \Jet^{n+1}(X)$ as
  $$
  \gamma_1 = ( \vec x_0, \vec x_1, \ldots, \vec x_n, \vec x_{n+1,1})
  \text{\, and \,}
  \gamma_2 = ( \vec x_0, \vec x_1, \ldots, \vec x_n, \vec x_{n+1,2}),
  $$
  then the difference $\gamma_1 - \gamma_2$ is given by the tangent
  vector written in the induced coordinates on $T_X$ as
  $$
  \gamma_1 - \gamma_2 = (\vec x_0, \vec x_{n+1,1} - \vec x_{n+1,2})
  \in T_X|_{\gamma_1(0)}.
  $$
  If the base point $\gamma_1(0)$ is clear, we will often write
  $\gamma_1 - \gamma_2 = \vec x_{n+1,1} - \vec x_{n+1,2} \in
  T_X|_{\gamma_1(0)}$.
\end{rem}

\subsection{Higher order infinitesimal deformations in jet language}
\label{ssec:idef}

The following notion is the higher-order analogue of the infinitesimal
deformation discussed in the introduction.

\begin{defn}\label{def:nthod}
  Let $f: Y \to X$ be a morphism of complex manifolds. An $n$th order
  infinitesimal deformation of $f$ is a morphism
  $$
  f_n : \Spec \mathbb C[\varepsilon]/(\varepsilon^{n+1}) \times Y \to X.
  $$
  whose restriction to $Y \cong \Spec \mathbb C\times Y$ agrees with $f$.
\end{defn}

It is clear from the universal property of the Douady space of morphisms that
an $n$th order infinitesimal deformation of $f$ is the same as a morphism
$\Spec \C[\varepsilon]/(\varepsilon^{n+1}) \to \Hom(Y,X)$ that maps the closed
point to the point of $\Hom(Y,X)$ that represents $f$. For our purposes,
however, the following description is the more useful. It also shows that for
$n=1$, Definition~\ref{def:nthod} and Notation~\ref{not:infdef} agree.

\begin{prop}\label{prop:infdefn}
  To give an $n$th order infinitesimal deformation of $f$, it is equivalent to
  give a section $Y \to f^* \Jet^n(X)$, where $f^* \Jet^n(X) := \Jet^n(X)
  \times_X Y$.
\end{prop}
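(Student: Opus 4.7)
The plan is to unwind everything using the universal property of the Douady space: by construction $\Jet^n(X) = \Hom\bigl(\Spec\C[\varepsilon]/(\varepsilon^{n+1}),\,X\bigr)$ represents the functor that assigns to a complex space $S$ the set $\Hom\bigl(S\times \Spec\C[\varepsilon]/(\varepsilon^{n+1}),\,X\bigr)$.

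First, I would rewrite what a section of $f^*\Jet^n(X) \to Y$ actually is. By the definition of the fiber product $f^*\Jet^n(X) = \Jet^n(X)\times_X Y$, such a section is the same data as a morphism $s: Y \to \Jet^n(X)$ satisfying $\pi_{n,0}\circ s = f$.

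Next, I would apply the universal property with $S = Y$: a morphism $s: Y \to \Jet^n(X)$ corresponds bijectively to a morphism
\[
f_n : Y\times \Spec\C[\varepsilon]/(\varepsilon^{n+1}) \longrightarrow X.
\]
This is the core of the identification. The only thing left is to check that, under this bijection, the compatibility condition $\pi_{n,0}\circ s = f$ translates exactly into the condition that $f_n$ restricts to $f$ on $Y\cong Y\times \Spec\C$. For this, I would observe that the canonical morphism $\pi_{n,0}: \Jet^n(X) \to X = \Hom(\Spec\C, X)$ is, by construction, induced by the closed immersion $\Spec\C \hookrightarrow \Spec\C[\varepsilon]/(\varepsilon^{n+1})$, and therefore corresponds under the universal property to pulling back a map out of $Y\times\Spec\C[\varepsilon]/(\varepsilon^{n+1})$ along the closed immersion $Y\times\Spec\C \hookrightarrow Y\times\Spec\C[\varepsilon]/(\varepsilon^{n+1})$. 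Hence $\pi_{n,0}\circ s$ corresponds to $f_n|_{Y\times\Spec\C}$, and the equation $\pi_{n,0}\circ s = f$ is precisely the requirement in Definition~\ref{def:nthod} that the restriction of $f_n$ to $Y$ equals $f$.

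There is no real obstacle here beyond carefully tracking the universal properties; the statement is essentially a reformulation. The only mildly subtle point is making sure that one uses the correct variance (i.e., that $\Hom(Y,\Hom(Z,X)) = \Hom(Y\times Z, X)$ in the Douady framework as recalled in the notation preceding Section~\ref{ssec:tgt}), so that the two descriptions are genuinely naturally isomorphic; once this is recorded, the proof is a one-line application of this adjunction together with the matching of $\pi_{n,0}$ with restriction to the reduced point.
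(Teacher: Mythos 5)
Your argument is correct and is essentially the paper's own proof: both reduce a section of $f^*\Jet^n(X)\to Y$ to a morphism $Y\to\Jet^n(X)$ over $X$ via the universal property of the fiber product, and then invoke the representing property of $\Jet^n(X)=\Hom\bigl(\Spec\C[\varepsilon]/(\varepsilon^{n+1}),X\bigr)$ to identify this with a morphism $\Spec\C[\varepsilon]/(\varepsilon^{n+1})\times Y\to X$ restricting to $f$. Your extra care in matching $\pi_{n,0}$ with restriction along $\Spec\C\hookrightarrow\Spec\C[\varepsilon]/(\varepsilon^{n+1})$ only makes explicit what the paper leaves implicit.
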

\begin{proof}
  It is clear from the universal property of $\Hom(Y,X)$ that to give an $n$th
  order infinitesimal deformation of $f$, it is equivalent to give a morphism
  $$
  \phi_n : Y \to \Hom\bigl(\Spec \mathbb C[\varepsilon]/(\varepsilon^{n+1}),\, X
  \bigr) = \Jet^n(X).
  $$
  with $\pi_{n,0} \circ \phi_n = f$. By the universal property of the fiber
  product, this is the same as to give a section.
\end{proof}

\subsection{Applications to deformations and to Theorem~\ref*{thm:class}}
\label{ssec:firstappl}

As an application of the methods and the language outlined in the previous
sections, we reproduce in part Horikawa's proof of Theorem~\ref{thm:class},
referring to Artin's paper \cite{ARTIN68} for the necessary convergence
results. More detailed computations are found in \cite{MR0322209}.

The proof follows the common approach to first construct a formal deformation
of $f$, which is then turned into a holomorphic solution.  The existence of a
formal solution is guaranteed by the following lemma which asserts that any
$n$th order infinitesimal deformation can be lifted to $n+1$st order.

\begin{lem}\label{lem:liftnnpo}
  In the setup of Theorem~\ref{thm:class}, let $\sigma_n : Y \to f^*\Jet^n(X)$
  be any section. Then there exists a lifting to $n+1$st order, i.e., a
  section $\sigma_{n+1} : Y \to f^*\Jet^{n+1}(X)$ making the following diagram
  commutative
  \begin{equation}\label{cd lift}
  \xymatrix{
    & & f^*\Jet^{n+1}(X) \ar[d]^{f^*(\pi_{n+1,n})} \\
    Y \ar@/^0.3cm/[urr]^{\exists \sigma_{n+1}} \ar[rr]^{\sigma_n} & & f^* \Jet^n(X).  }
  \end{equation}
\end{lem}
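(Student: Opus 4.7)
The plan is to use the standard obstruction-theoretic argument: construct local liftings, observe that their differences form a Čech 1-cocycle in $f^*(T_X)$, and then invoke the vanishing hypothesis to trivialize that cocycle.

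First, I would establish local existence of liftings. By Fact~\ref{fact:jbmp1}, the morphism $\pi_{n+1,n} : \Jet^{n+1}(X) \to \Jet^n(X)$ is a locally trivial fiber bundle, hence so is its pull-back $f^*(\pi_{n+1,n}) : f^*\Jet^{n+1}(X) \to f^*\Jet^n(X)$. Choose an open cover $\{U_i\}$ of $Y$ sufficiently fine that over each $U_i$ this bundle is trivial and hence admits a section through $\sigma_n|_{U_i}$. This produces local liftings $\sigma_{n+1}^{(i)} : U_i \to f^*\Jet^{n+1}(X)$ satisfying $f^*(\pi_{n+1,n}) \circ \sigma_{n+1}^{(i)} = \sigma_n|_{U_i}$.

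Next, I would compare the local liftings on overlaps. On $U_{ij} := U_i \cap U_j$, both sections $\sigma_{n+1}^{(i)}|_{U_{ij}}$ and $\sigma_{n+1}^{(j)}|_{U_{ij}}$ project to $\sigma_n|_{U_{ij}}$ under $f^*(\pi_{n+1,n})$, i.e., they agree to $n$th order. By Fact~\ref{fact:jbmp2} together with Remark~\ref{rem:pbdiff}, the difference
$$
\tau_{ij} := \sigma_{n+1}^{(i)} - \sigma_{n+1}^{(j)} \in H^0\bigl(U_{ij},\, f^*(T_X)\bigr)
$$
is a well-defined section of $f^*(T_X)$ over $U_{ij}$. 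On triple intersections $U_{ijk}$, one checks directly that $\tau_{ij} + \tau_{jk} = \tau_{ik}$, so $\{\tau_{ij}\}$ is a Čech 1-cocycle with values in $f^*(T_X)$.

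Finally, the hypothesis $H^1\bigl(Y,\, f^*(T_X)\bigr) = \{0\}$ forces this cocycle to be a coboundary: after passing to a refinement if necessary, there exist sections $\tau_i \in H^0\bigl(U_i,\, f^*(T_X)\bigr)$ such that $\tau_{ij} = \tau_i - \tau_j$ on every $U_{ij}$. I would then replace each local lifting by the translated section $\widetilde\sigma_{n+1}^{(i)} := \sigma_{n+1}^{(i)} - \tau_i$ (using the affine bundle action of Section~\ref{ssec:affine}). These modified liftings still project to $\sigma_n$, and on $U_{ij}$ they now satisfy $\widetilde\sigma_{n+1}^{(i)} - \widetilde\sigma_{n+1}^{(j)} = \tau_{ij} - (\tau_i - \tau_j) = 0$, so they glue to a global section $\sigma_{n+1} : Y \to f^*\Jet^{n+1}(X)$ that fits into the diagram~\eqref{cd lift}.

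There is no real obstacle here: the argument is essentially formal once the affine bundle structure of Section~\ref{ssec:affine} is in place. The only point requiring any care is verifying the cocycle identity, which reduces to the associativity of the translation action, i.e., to the fact that subtraction of jets agreeing to $n$th order is additive in the appropriate sense — a direct consequence of Remark~\ref{rem:diffincoo}.
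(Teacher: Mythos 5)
Your argument is correct and is essentially the same as the paper's own proof of Lemma~\ref{lem:liftnnpo}: local liftings via local triviality of the affine bundle, differences forming a {\v C}ech cocycle in $f^*(T_X)$ by Remark~\ref{rem:pbdiff}, and trivialization of that cocycle using $H^1\bigl(Y,\, f^*(T_X)\bigr)=\{0\}$ to correct the local liftings so they glue. No substantive differences to report.
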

\begin{proof}
  Since both $f^*\Jet^n(X)$ and $f^*\Jet^{n+1}(X)$ are locally trivial on $Y$, it
  is clear that liftings to $n+1$st order always exist locally. More
  precisely, there exists a covering of $Y$ with open sets
  $(U_{\alpha})_{\alpha \in A}$ and there are sections $\sigma^\alpha_{n+1} :
  U_\alpha \to f^*\Jet^{n+1}(X)$ such that $f^*(\pi_{n+1,n}) \circ
  \sigma^\alpha_{n+1} = \sigma_n|_{U_\alpha}$. We have seen in
  Remark~\ref{rem:pbdiff} that for any $\alpha, \beta \in A$, the difference
  defines a section
  $$
  \nu_{\alpha \beta} = \sigma^\alpha_{n+1}|_{U_\alpha \cap U_\beta} -
  \sigma^\beta_{n+1}|_{U_\alpha \cap U_\beta} \in H^0\bigl( U_\alpha \cap U_\beta,
  f^*(T_X) \bigr).
  $$
  The $\nu_{\alpha \beta}$ obviously satisfy the {\v C}ech cocycle condition and we
  obtain a cohomology class $(\nu_{\alpha \beta}) \in H^1\bigl(Y,\, f^*(T_X)
  \bigr)$ which is zero by assumption.

  Consequently, there are sections $\lambda_\alpha \in H^0\bigl( U_\alpha,\,
  T_X\bigr)$ with $\lambda_\alpha - \lambda_\beta = \nu_{\alpha \beta}$. If we
  set
  $$
  \sigma'^\alpha_{n+1} :=  (-\lambda_\alpha) + \sigma^\alpha_{n+1},
  $$
  then $\sigma'^\alpha_{n+1}$ and $\sigma'^\beta_{n+1}$ agree on
  $U_\alpha \cap U_\beta$ for any $\alpha, \beta \in A$ and therefore
  define a global section $\sigma_{n+1} : Y \to f^*\Jet^{n+1}(X)$ that
  lifts $\sigma_n$.
\end{proof}

\begin{proof}[Proof of Theorem~\ref{thm:class}]
  Let $\sigma \in H^0\bigl( Y,\, f^*(T_X)\bigr)$ be any first order
  infinitesimal deformation. Choose a neighborhood $U$ of the point $[f] \in
  \Hom\bigl( Y,\, X\bigr)$, and view $U$ as a subset of $\mathbb A^n$, given
  by equations $U = \{ f_1 = \cdots = f_m = 0 \}$. With this notation, our aim
  is to find a holomorphic map $\tilde \sigma : \Delta \to \mathbb A^n$ which
  agrees with $\sigma$ to first order and satisfies $f_i \circ \tilde \sigma =
  0$ for all $i$. By Michael Artin's result on solutions of analytic
  equations, \cite[Thm.~1.2]{ARTIN68}, a holomorphic solution will exist if
  there is a formal solution to the problem.

  Using Lemma~\ref{lem:liftnnpo} inductively, we can find a sequence $\sigma = \sigma_1,
  \sigma_2, \ldots$ of liftings to arbitrary order, with $\pi_{n+1,n} \circ \sigma_{n+1}
  = \sigma_n$. If we view the $\sigma_n$ as morphisms
  $$
  \sigma_n: \Spec \mathbb C[\varepsilon]/(\varepsilon^{n+1}) \to \Hom\bigl( Y,\,
  X\bigr),
  $$
  this defines a formal map
  $$
  \hat \sigma: \Spec \mathbb C[[\varepsilon]] \to \Hom\bigl( Y,\, X\bigr),
  $$
  which satisfies $f_i \circ \hat \sigma = 0$ for all $i$, and whose first order
  part agrees with $\sigma$. Artin's result therefore applies.
\end{proof}

\section{Strategy for the proof of Theorem~\ref*{thm:mainresult}}
\label{sec:strategy}

\subsection{Introduction}

Before giving a complete proof of Theorem~\ref{thm:mainresult} in
Section~\ref{sec:proof} below, we first outline the main strategy of the proof
and recall a few elementary facts. We hope that the explanations given below
will help to motivate the preparatory Sections~\ref{sec:ODEJets} and
\ref{sec:genlFrob} where we gather several technical results used in the
proof.

We will constantly use a number of elementary facts concerning vector fields
on mani\-folds, their associated ordinary differential equations, flow maps and
local actions of 1-parameter groups. Since all relevant results hold without
change in the holomorphic as well as in the $C^\infty$ category, we have
chosen to use \cite{Warner83} as our main reference, for the reader's
convenience. A thorough introduction to vector fields and their flows on
possibly singular complex spaces is found in \cite{Kaup65}.

\subsection{Outline of the proof}
\label{ssec:outline}

To start the outline, consider the setup of Theorem~\ref{thm:mainresult} in
the simple case where $f: Y \to X$ is a closed immersion and where both $X$
and $Y$ are compact. Viewing $Y$ as a subspace of $X$, let
$$
\sigma \in \Image \Bigl(H^0\bigl(Y,\, \sF|_Y \bigr) \to H^0\bigl(Y,\, T_X|_Y
\bigr) \Bigr)
$$
be a first order infinitesimal deformation of $f$ along $\sF$.

If $\sigma$ is the restriction of a global vector field $D \in H^0\bigl( X,\,
\sF \bigr)$, we can integrate the vector field $D$ globally on $X$, obtaining
a holomorphic action of a 1-parameter group, say
$$
\phi : \Delta \times X \to X,
$$
such that for each point $x \in X$, the arc $\gamma_x : \Delta \to X$, $t
\mapsto \phi(t, x)$ is a solution to the initial value problem associated with
the ordinary differential equation described by $D$.  In down-to-earth terms,
the germ of $\gamma_x$ is the unique solution to the problem of finding a germ
of an arc $\gamma : \Delta \to X$ that satisfies the two following
requirements,
\begin{align}
  \gamma(0) & = x && \text{and} \label{eq:ivp0}\\
  \gamma'(t)& = D\bigl(\gamma(t) \bigr) && \text{for all } t \in \Delta. \label{eq:ivp}
\end{align}

\begin{notation}\label{not:intcurve}
  We call $\gamma_x$ the \emph{integral curve of $D$ through $x$}.
\end{notation}

Viewing $\phi$ as a deformation of $f$, this gives a proof of
Theorem~\ref{thm:mainresult} in case $\sigma$ comes from a global vector
field. For this, observe that requirement~\eqref{eq:dalngF} of
Theorem~\ref{thm:mainresult} immediately follows from \eqref{eq:ivp} above.

If $\sigma$ is the restriction of a vector field $D \in H^0\bigl( U,\, \sF
\bigr)$ that is defined only on an open neighborhood $U$ of $Y$, but perhaps
not on all of $X$, essentially the same strategy applies.  In this setup,
there exists a \emph{local} action, cf.~\cite[Thm.~1.48]{Warner83}. More
precisely, there exists an open, relatively compact neighborhood $V$ of $Y$
with $Y \subseteq V \Subset U$, there exists a disk $\Delta$ and a map
$$
\phi : \Delta \times V \to U,
$$
such that the arcs $t \mapsto \phi(t, x)$ are again solutions to the initial
value problems~\eqref{eq:ivp}. As before $\phi$ gives a deformation of $f$
that solves the problem.

In general, however, $\sigma$ is not the restriction of a vector field that
lives on a neighborhood of $Y$, and extensions of $\sigma$ to open subsets of
$X$ exist only locally, cf.~\cite[Rem.~1.52]{Warner83}. More precisely, there
exist finitely many open sets $U_i$ that are open in $X$, cover $Y$ and admit
vector fields $D_i \in H^0 \bigl( U_i,\, T_X \bigl)$ whose restrictions $D_i
|_{Y \cap U_i}$ equal $\sigma|_{Y \cap U_i}$. As before, we find relatively
compact open subsets $V_i \Subset U_i$ that still cover $Y$, and local action
morphisms
$$
\phi_i : \Delta_i \times V_i \to U_i,
$$
again with the property that if $x$ is a point in $V_i$, we obtain an arc
$\gamma_{x,i} : \Delta \to X$ that solves the initial value problem for $D_i$,
as in~\eqref{eq:ivp0} and \eqref{eq:ivp} above.  However, if $i \not = j$ are
any two indices, the local action morphisms will generally not agree on the
overlap $V_i \cap V_j$, and if $x$ is in $V_i \cap V_j \cap Y$, the arcs
$\gamma_{x,i}$ and $\gamma_{x,j}$ will likewise not agree.

There are a few things we can say about $\gamma_{x,i}$ and $\gamma_{x,j}$,
though. Since
$$
D_i|_{V_i \cap V_j \cap Y} = D_j|_{V_i \cap V_j \cap Y} = \sigma|_{V_i \cap
  V_j \cap Y}
$$
and since $\gamma_{x,i}$ and $\gamma_{x,j}$ satisfy~\eqref{eq:ivp}, it is
clear that for any point $x \in V_i \cap V_j \cap Y$, the arcs $\gamma_{x,i}$
and $\gamma_{x,j}$ agree to first order, though perhaps not to second order.
In other words, the $\phi_i$ induce sections\footnote{Since the $\gamma_x$ are
  holomorphic for each $x$, the $\phi_i$ give sections in
  $\Jet^n(X)|_{V_\bullet \cap Y}$, for any number $n$. For the purposes of
  this outline, we concentrate on the case $n = 2$.}
$$
\tau_{D_i}^2 : V_i \cap Y \to \Jet^2(X)|_{V_i \cap Y} \text{\, and \,}
\tau_{D_j}^2 : V_j \cap Y \to \Jet^2(X)|_{V_j \cap Y}
$$
whose first-order parts $\pi_{2,1}(\tau_\bullet) : V_\bullet \cap Y \to
\Jet^1(X)|_{V_\bullet \cap Y}$ agree on the overlap $V_i \cap V_j \cap Y$. We
have seen in Section~\ref{ssec:affine} that the difference $\tau_{D_j}^2 -
\tau_{D_i}^2$ can be expressed as a section of $T_X|_{V_i \cap V_j \cap Y}$,
and we will see in Theorem~\ref{thm:dj} below that this difference is
expressed in terms of the Lie bracket of the vector fields $D_\bullet$, as
follows
$$
\tau^2_{D_j}|_{V_i \cap V_j \cap Y} - \tau^2_{D_i}|_{V_i \cap V_j \cap Y} =
\bigl[D_i,\, D_j\bigr]|_{V_i \cap V_j \cap Y}.
$$
This will allow us to describe the {\v C}ech cocycles associated with the
problem of lifting the infinitesimal deformation $\sigma$ from first to second
order in terms of Lie brackets. An argument similar to the proof of
Lemma~\ref{lem:liftnnpo} will then allow to adjust the vector fields $D_i$, in
a way such that the associated local group actions give a well-defined lifting
of $\sigma$ to second order, globally along $Y$. An iterated application of
this method will give liftings to arbitrary order.

\section{Jets associated with vector fields}
\label{sec:ODEJets}

If $D_1$ and $D_2$ are two vector fields on $X$ and $x \in X$ is a
point, the integral curves $\gamma_i$ of $D_i$ through $x$ do
generally not agree. If $\gamma_1$ and $\gamma_2$ agree to $n$th
order, we have seen that the difference between the $n+1$st order
parts of the $\gamma_i$ can be expressed as an element $\vec v \in
T_X|_x$. In this section, we aim to express $\vec v$ purely in terms
of the vector fields $D_i$ and their Lie brackets. Before formulating
the result in Theorem~\ref{thm:dj} below, we need to introduce
notation.

\begin{defn}[Jets associated with a vector field]\label{def:jetvec}
  Let $U \subseteq X$ be an open set, and let $D \in H^0\bigl( U,\, T_X
  \bigr)$ be a vector field. Given any number $n \in \mathbb N$, let $\tau^n_D
  : U \to \Jet^n(X)$ be the section in the $n$th jet bundle induced by the
  local action of the vector field.

  In other words, if $x \in U$ is any point, and $\gamma_x : \Delta \to X$ the
  unique curve germ that satisfies~\eqref{eq:ivp0} and \eqref{eq:ivp}, then
  $\tau^n_D(x)$ is exactly the $n$th order jet associated with $\gamma_x$.
\end{defn}

\begin{defn}[Iterated Lie brackets]
  Let $U \subseteq X$ be an open set, and let $D_1, D_2 \in H^0\bigl( U,\, T_X
  \bigr)$ be two vector fields. If $n\in \bN$ is any number, $n \geq 2$, we
  recursively define a vector field, called the $n$th iterated Lie bracket of
  $D_1$ and $D_2$, as follows,
  $$
  \left[D_1,\, D_2\right]^{(2)} := \left[D_1,\, D_2\right] \text{\,\, and
    \,\,} \left[D_1,\, D_2\right]^{(n)} := \bigl[D_1,\,\left[D_1 ,\,
    D_2\right]^{(n-1)} \bigr].
  $$
\end{defn}

\begin{thm}\label{thm:dj}
  Let $U \subseteq X$ be an open set, and let $D_1, D_2 \in H^0 \bigl( U,\,
  T_X \bigr)$ be two vector fields. If $x \in U$ is any point and $n$ any
  number such that the $n$th order jets associated with $D_1$ and $D_2$ agree
  at $x$, i.e.~$\tau^n_{D_1}(x) = \tau^n_{D_2}(x)$, then the tangent vector
  that describes the difference between the $n+1$st order jets is expressed in
  terms of iterated Lie brackets as follows,
  \begin{equation}\label{eq:t33}
    \tau_{D_2}^{n+1}(x) - \tau_{D_1}^{n+1}(x) = [D_1,\,D_2]^{(n+1)}|_x.    
  \end{equation}
\end{thm}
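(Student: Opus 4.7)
The plan is to reduce the jet identity \eqref{eq:t33} to an operator identity acting on holomorphic function germs at $x$. By Remark~\ref{rem:diffincoo} together with Fa\`a di Bruno's formula applied to $f \circ \gamma_i$, the hypothesis $\tau^n_{D_1}(x) = \tau^n_{D_2}(x)$ is equivalent to $(D_2^k - D_1^k)f(x) = 0$ for every $k \le n$ and every germ $f$, and the identity \eqref{eq:t33} becomes
\[
(D_2^{n+1}f - D_1^{n+1}f)(x) = [D_1, D_2]^{(n+1)}(f)(x)
\]
for all such $f$. The whole argument will be this single computation.

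Set $E := D_2 - D_1$ and $\Delta_k(f) := (D_2^k - D_1^k)f$. The $k = 1$ instance of the hypothesis gives $E(x) = 0$ as a tangent vector, hence $E(g)(x) = 0$ for \emph{every} germ $g$. Writing $D_2 = D_1 + E$ and exploiting this vanishing, I obtain the operator recursion $\Delta_k = D_1 \Delta_{k-1} + E\, D_2^{k-1}$, which, iterated $m$ times, yields
\[
\Delta_k = D_1^m \Delta_{k-m} + \sum_{j=0}^{m-1} D_1^j\, E\, D_2^{k-1-j}.
\]

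The technical heart of the proof is the following claim, which I would prove by induction on $n$: \emph{under the hypothesis of the theorem, $D_1^j(Eg)(x) = 0$ for every germ $g$ and every $0 \le j \le n-1$.} The base case $j=0$ is immediate. For the inductive step, apply the iterated recursion with $m = n-1$ to the identity $\Delta_n(f)(x) = 0$: every summand $D_1^j(E(D_2^{\bullet} f))(x)$ with $j \le n-2$ vanishes by the induction hypothesis, leaving $D_1^{n-1}(Ef)(x) = 0$.

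With the claim in hand, applying the iterated recursion with $m = n$ to $\Delta_{n+1}$ and dropping the vanishing terms gives $\Delta_{n+1}(f)(x) = D_1^n(Ef)(x)$. On the other hand, since $[D_1, D_1] = 0$, one has $[D_1, D_2]^{(n+1)} = \mathrm{ad}_{D_1}^n(E) = \sum_{k=0}^n (-1)^k \binom{n}{k} D_1^{n-k} E D_1^k$; every term with $k \ge 1$ has the shape $D_1^{n-k}(Eh)(x)$ with $n-k \le n-1$ and vanishes by the same claim, so $[D_1, D_2]^{(n+1)}(f)(x) = D_1^n(Ef)(x)$ as well, and the two sides of \eqref{eq:t33} coincide. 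The main obstacle I expect is the bookkeeping in the technical claim -- iterating the recursion to precisely the right depth and tracking which error terms are killed by the hypothesis. The degenerate case $D_1(x)=0$ causes no trouble, since then the hypothesis forces $D_2(x)=0$, both integral curves are constant, and both sides of \eqref{eq:t33} vanish trivially.
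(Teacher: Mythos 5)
Your proof is correct, and it follows the same overall reduction as the paper: both arguments first show that the difference vector $\vec v$ is characterized by $f'(x)\cdot \vec v = \bigl((D_2^{n+1}-D_1^{n+1})f\bigr)(x)$ for all germs $f$ (the paper's Equation~\eqref{eq:dt7}), and then establish the operator identity $\bigl((D_2^{n+1}-D_1^{n+1})f\bigr)(x) = \bigl([D_1,D_2]^{(n+1)}f\bigr)(x)$ (the paper's Equation~\eqref{eq:dt8}). The difference lies in how that second identity is proved. The paper carries out the computation in full only for $n=1$, via the two equalities $(D_1^2f)(x)=(D_2D_1f)(x)$ and $(D_2^2f)(x)=(D_1D_2f)(x)$, and for general $n$ it merely asserts that an induction with ``elementary but tedious computations in local coordinates'' works, deferring to the diploma thesis \cite{Lohmann08}. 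You instead give a complete, coordinate-free argument: setting $E=D_2-D_1$, the recursion $\Delta_k = D_1\Delta_{k-1}+ED_2^{k-1}$ together with the auxiliary claim $D_1^j(Eg)(x)=0$ for $j\le n-1$ (which is the correct generalization of the paper's Equation~\eqref{eq:dt6}) collapses both $\Delta_{n+1}(f)(x)$ and $\mathrm{ad}_{D_1}^n(E)(f)(x)$ to the common value $D_1^n(Ef)(x)$. This buys a self-contained proof of the general case that the paper does not actually supply. Two small presentational points: your auxiliary claim is really an induction on $j$ for fixed $n$ (your text describes only the top step $j=n-1$ and labels it an induction on $n$), and the forward implication of your ``equivalence'' via Fa\`a di Bruno is all that is used; neither affects correctness.
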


\begin{proof}[Proof of Theorem~\ref{thm:dj} for $n=1$]
  Choose a coordinate neighborhood $U$ of $x$ and let $\gamma_i: \Delta \to X$
  be the germs of the integral curves of $D_i$ through $x$ for $i \in
  \{1,2\}$.  By Remark~\ref{rem:diffincoo} and Fact~\ref{fact:jbmp1}, the
  difference between the second order parts of the $\gamma_i$ is then
  expressed in $U$-coordinates as the difference of the second derivatives,
  \begin{equation}\label{eq:dtau}
    \vec v := \tau_{D_2}^2(x) - \tau_{D_1}^2(x) = \gamma_2''(0) -
    \gamma_1''(0) \in T_X|_x. 
  \end{equation}
  We aim to express the right hand side of~\eqref{eq:dtau} in terms of the
  vector fields $D_i$. For that, it is convenient to recall that to give a
  vector field $D$ on $U$, it is equivalent to give a derivation $\sO_X|_U \to
  \sO_X|_U$, written as $f \mapsto Df$. Likewise, to give a tangent vector at
  $x$, it is equivalent to give a derivation $\sO_{X,x} \to \mathbb C$, where
  $\sO_{X,x}$ denotes the stalk of $\sO_X$ at $x$. For a given tangent vector
  $\vec w \in T_X|_x$, the derivation is $f \mapsto f'(x)\cdot \vec w$, where
  $f'$ is the derivative of $f$ in $U$-coordinates, and the dot is
  matrix-vector multiplication.  The derivations commute with restriction, so
  that $(Df)(x) = f'(x) \cdot D|_x$ for all $f$.

  Now, if $f \in \sO_{X,x}$ is any germ of a function, taking the second
  derivative of $f\circ \gamma_i$ yields
  \begin{equation}\label{eq:dt1}
    f'(x)\cdot \vec v = f'(x)\cdot \bigl( \gamma_2''(0) - \gamma_1''(0)
    \bigr) = (f\circ \gamma_2 - f\circ \gamma_1)''(0).
  \end{equation}
  In order to relate the right hand side of~\eqref{eq:dt1} to the vector
  fields $D_i$, recall Equation~\eqref{eq:ivp}, which asserts that for any
  function $g$, we have $(g\circ \gamma_i)' = (D_i g) \circ \gamma_i$.
  Applying this to $f$ and $D_if$, we obtain the following expression for the
  second derivatives of $f \circ \gamma_i$,
  \begin{equation}\label{eq:dt3}
    (f\circ \gamma_i)'' = \bigl( (D_if) \circ \gamma_i \bigr)' =
    \bigl(D_i(D_if)\bigr) \circ \gamma_i = (D_i^2f) \circ \gamma_i.
  \end{equation}
  Substituting~\eqref{eq:dt3} into \eqref{eq:dt1} we find that the equality
  $f'(x)\cdot \vec v = \bigl((D_2^2-D_1^2)f \bigr)(x)$ holds for all $f \in
  \sO_{X,x}$, and therefore expresses $\vec v$ in terms of the vector fields
  $D_i$.  To prove~\eqref{eq:t33}, it is therefore sufficient to show that
  \begin{equation}\label{eq:dt5}
    \bigl((D_2^2-D_1^2)f \bigr)(x) = \bigl([D_1, D_2]f \bigr)(x) =
    \bigl((D_1D_2-D_2D_1)f \bigr)(x)    
  \end{equation}
  holds true for all $f \in \sO_{X,x}$.  We show a stronger statement: for all
  $f$ we have
  \begin{equation}\label{eq:dt6}
    (D_1^2f)(x) = (D_2D_1f)(x) \text{\, and \,} (D_2^2f)(x) =
    (D_1D_2f)(x).
  \end{equation}
  To prove \eqref{eq:dt6}, note that the equality $D_1|_x = D_2|_x$ implies
  that $(D_1g)(x) = (D_2g)(x)$ for every $g \in \sO_{X,x}$. An application to
  $g = D_1f$ and $g = D_2f$, respectively, gives the two equalities
  in~\eqref{eq:dt6}.
\end{proof}

\begin{proof}[Sketch of proof of Theorem~\ref{thm:dj} for arbitrary $n$]
  The line of argumentation used to show Theorem~\ref{thm:dj} in case $n = 1$
  also works for arbitrary $n$. As a first step, one shows that the difference
  vector $\vec v := \tau_{D_2}^{n+1}(x) - \tau_{D_1}^{n+1}(x) =
  \gamma_2^{(n+1)}(0) - \gamma_1^{(n+1)}(0) \in T_X|_x$ is determined by that
  fact that it satisfies the equation
  \begin{equation}\label{eq:dt7}
    f'(x)\cdot \vec v = \bigl((D_2^{n+1}-D_1^{n+1})f \bigr)(x)    
  \end{equation}
  for all $f \in \sO_{X,x}$. Once this is established, it remains to show that
  \begin{equation}\label{eq:dt8}
    \bigl((D_2^{n+1}-D_1^{n+1})f \bigr)(x) = \bigl([D_1, D_2]^{(n+1)} f \bigr)(x),
  \end{equation}
  again for all $f \in \sO_{X,x}$. Equations~\eqref{eq:dt7} and \eqref{eq:dt8}
  can be shown by induction on $n$, using elementary but tedious computations
  in local coordinates. We refer to \cite[Satz~1.4]{Lohmann08} for details.
\end{proof}

\section{Frobenius theorems and deformations along subsheaves}
\label{sec:genlFrob}

In Theorem~\ref{thm:mainresult}, we aim to deform the morphism $f$ along the
sheaf $\sF$. For that, we aim to define an analytic subspace
$\Hom_{\sF}\bigl(Y,\, X \bigr) \subseteq \Hom\bigl(Y,\, X \bigr)$ which
parametrizes such deformations. If $\sF$ is a regular foliation, the space
$\Hom_{\sF}\bigl(Y,\, X \bigr)$ can be defined as a relative analytic Douady
space of morphisms, using the classical Frobenius Theorem which asserts that
$\sF$ is the foliation associated with a morphism, at least locally.

\begin{thm}[\protect{Frobenius Theorem, cf.~\cite[Thm.~1.60]{Warner83}}]\label{thm:frob}
  Let $Z$ be a complex manifold and $\sG \subset T_Z$ a regular foliation,
  i.e., a vector subbundle of $T_Z$ which is closed under the Lie bracket.  If
  $z \in Z$ is any point, then there exists an analytic neighborhood $U = U(z)
  \subset Z$ which has a product structure, $U = A \times B$, such that $\sG =
  \pi_A^*(T_A)$, where $\pi_A : A\times B \to A$ is the projection to the
  first factor. \qed
\end{thm}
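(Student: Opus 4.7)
The plan is to prove the theorem by induction on the rank $k := \rank \sG$. The base case $k = 1$ reduces to the classical holomorphic flow-box theorem: a non-vanishing holomorphic vector field can be locally straightened out to $\partial/\partial w_1$ by a suitable choice of coordinates, which is itself a direct consequence of the local existence and uniqueness of holomorphic solutions to ordinary differential equations (cf.\ \cite{Kaup65}). Taking $A$ to be a disk in the $w_1$-direction and $B = \{w_1 = 0\}$ then yields the desired product decomposition with $\sG = \pi_A^*(T_A)$.

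For the inductive step, assume the result for regular foliations of rank smaller than $k$, and choose a local holomorphic frame $D_1, \ldots, D_k$ of $\sG$ near $z$. Applying the base case to the rank-one subfoliation generated by $D_1$, introduce coordinates $(w_1, \ldots, w_n)$ in which $D_1 = \partial/\partial w_1$. Replace $D_i$ for $i \geq 2$ by
\[
D_i' := D_i - D_i(w_1) \cdot D_1,
\]
so that $\{D_1, D_2', \ldots, D_k'\}$ is still a frame of $\sG$ and each $D_i'$ has no $\partial/\partial w_1$-component.

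The main technical point, which I expect to be the principal obstacle, is to refine this frame so that $D_2', \ldots, D_k'$ commute with $D_1$. Involutivity of $\sG$ gives $[D_1, D_i'] \in \sG$; a direct computation shows that $[\partial/\partial w_1, D_i']$ still has no $\partial/\partial w_1$-component (it is just the $w_1$-derivative of the coefficients of $D_i'$), so expanding in the frame forces $[D_1, D_i'] = \sum_{j \geq 2} \beta_{ij}(w)\, D_j'$ for holomorphic functions $\beta_{ij}$. Seeking $\tilde D_i := \sum_{j \geq 2} M_{ij}(w)\, D_j'$ with $[D_1, \tilde D_i] = 0$ reduces, after expansion, to the linear matrix-valued ODE
\[
\frac{\partial M}{\partial w_1} = -M \cdot (\beta_{ij}), \qquad M|_{\{w_1 = 0\}} = I,
\]
which admits a unique holomorphic solution by standard holomorphic ODE theory with holomorphic dependence on parameters. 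The resulting fields $\tilde D_i$ commute with $\partial/\partial w_1$, so in the coordinate expansion $\tilde D_i = \sum_{\ell \geq 2} \tilde a_{i\ell}(w)\, \partial/\partial w_\ell$ each coefficient $\tilde a_{i\ell}$ is independent of $w_1$.

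It now follows that $\tilde D_2, \ldots, \tilde D_k$ are tangent to every slice $\{w_1 = \text{const}\}$ and restrict there to a holomorphic frame of a rank-$(k-1)$ distribution $\sG'$; involutivity of $\sG$ descends to involutivity of $\sG'$ on the slice $S := \{w_1 = 0\}$. The inductive hypothesis applied to $(S, \sG')$ produces a decomposition $S = A' \times B$ with $\sG' = \pi_{A'}^*(T_{A'})$. Combined with the $w_1$-axis, this yields a local decomposition $U = (\Delta \times A') \times B$ near $z$, and since $\sG$ is generated there by $D_1$ together with the lifts of a frame of $T_{A'}$, we obtain $\sG = \pi_{\Delta \times A'}^*(T_{\Delta \times A'})$, completing the induction.
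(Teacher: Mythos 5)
Your argument is correct, but note that the paper does not prove this statement at all: Theorem~\ref{thm:frob} is quoted as a classical result with a \verb+\qed+ and a citation to Warner \cite[Thm.~1.60]{Warner83}, so there is no in-paper proof to compare against. What you have written is a sound, self-contained version of the standard inductive proof (essentially the one in Warner): flow-box for the base case, elimination of the $\partial/\partial w_1$-components, and then the key step of solving the linear matrix ODE $\partial M/\partial w_1 = -M\beta$ to replace the frame by one commuting with $D_1$, so that the foliation becomes the pull-back of a rank-$(k-1)$ involutive distribution on the slice $\{w_1=0\}$. Two small points are worth making explicit if you write this up: (i) the solution $M$ of the linear ODE with $M|_{\{w_1=0\}}=I$ is invertible wherever defined (e.g.\ by Liouville's formula for $\det M$), which is needed for $\{D_1,\tilde D_2,\dots,\tilde D_k\}$ to remain a frame of $\sG$; and (ii) in checking that involutivity descends to the slice, one should observe that $[\tilde D_i,\tilde D_j]$ has no $\partial/\partial w_1$-component and that its coefficients in the frame $\tilde D_2,\dots,\tilde D_k$ are independent of $w_1$, so the restricted distribution $\sG'$ is well defined on every slice and involutive there. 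With these remarks your induction closes correctly.
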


After introducing some notation and after proving the auxiliary
Proposition~\ref{prop:flowstabxr}, we give a generalization of the Frobenius
Theorem that works for arbitrary Lie-closed sheaves. While this result,
formulated in Corollary~\ref{cor:newfrob}, is probably known to experts, we
include a full proof, for lack of an adequate reference. We will use this
version of the Frobenius Theorem to define the space $\Hom_{\sF}\bigl(Y,\, X
\bigr)$ in Corollary~\ref{cor:analyticset} and to prove some of its universal
properties.

Throughout the present section, we maintain the notation of
Theorem~\ref{thm:mainresult} where $X$ is a complex manifold and $\sF
\subseteq T_X$ a sheaf which is closed under the Lie bracket.

\begin{notation}[Stratification of $X$]\label{not:strata}
  It follows immediately from semicontinuity of rank that for any number $r$,
  the subset
  $$
  X_r := \bigl\{ x \in X\, | \, \rank( \sF|_x \to T_X|_x) = r \bigr\}
  \subseteq X
  $$
  is a locally closed analytic subspace of $X$. We consider the natural
  sequence of closed analytic subspaces of $X_r$,
  $$
  X_r = X_r^0 \supseteq X_r^1 \supseteq \cdots \supseteq X_r^{m_r} \supseteq
  X_r^{m_r+1} = \emptyset,
  $$
  where $X_r^{i+1}$ is defined inductively as the singular locus of $X_r^{i}$.
  We obtain a decomposition of $X$ into finitely many disjoint, smooth and
  locally closed analytic subspaces,
  $$
  X = \bigcup^{\bullet}_{r,s} Z_r^s \text{\,\, with \,\,} Z_r^s := X_r^s
  \setminus X_r^{s+1}.
  $$
\end{notation}

\begin{prop}\label{prop:flowstabxr}
  Let $r$ be any number such that $X_r \not = \emptyset$, let $x \in X_r$ be
  any point and $D \in H^0 \bigl( U,\, \sF \bigr)$ a vector field, defined in
  a neighborhood $U = U(x) \subseteq X$ of $x$. If $\gamma_x : \Delta \to X$
  is the integral curve of $D$ through $x$, as defined in
  Notation~\ref{not:intcurve}, then $\gamma_x(t) \in X_r$ for all $t \in
  \Delta$.
\end{prop}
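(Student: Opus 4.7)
The plan is to show that the rank function
$$\rho(t) := \rank\bigl(\sF|_{\gamma_x(t)} \to T_X|_{\gamma_x(t)}\bigr)$$
is locally constant on $\Delta$. Since $\Delta$ is connected and $\rho(0) = r$, this gives $\gamma_x(t) \in X_r$ for all $t \in \Delta$.

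Fix $t_0 \in \Delta$ and set $y_0 := \gamma_x(t_0)$. First I would choose a small Stein open neighborhood $V \subseteq U$ of $y_0$ on which $\sF$ is generated by finitely many sections $D_1, \ldots, D_k \in H^0(V, \sF)$. Let $\phi_s : V \to X$ denote the local flow of $D$, defined for $s$ in some small disk $\Delta' \ni 0$, and after shrinking $V$ and $\Delta'$ assume that $\phi_s(y_0) \in V$ for all $s \in \Delta'$, so that $\gamma_x(t_0 + s) = \phi_s(y_0) \in V$. Using the Lie-closedness of $\sF$ together with the Stein assumption, we may write
$$[D,\, D_i] \;=\; \sum_{j} h_{ij}\, D_j \quad \text{on } V, \qquad h_{ij} \in \sO(V).$$

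Next I would consider, for each $i$, the curve $V_i : \Delta' \to T_X|_{y_0}$ defined by
$$V_i(s) \;:=\; d\phi_{-s}\bigl(D_i(\gamma_x(t_0 + s))\bigr) \;=\; (\phi_s^* D_i)(y_0).$$
The standard Lie derivative formula $\frac{d}{ds}\phi_s^* D_i = \phi_s^*[D, D_i]$, combined with the structure relation above, yields the linear system of ODEs
$$\frac{d}{ds} V_i(s) \;=\; \sum_{j} h_{ij}(\gamma_x(t_0 + s))\, V_j(s)$$
in the $T_X|_{y_0}$-valued unknowns $V_1, \ldots, V_k$. Since $d\phi_{-s}$ is a linear isomorphism $T_X|_{\gamma_x(t_0+s)} \to T_X|_{y_0}$, the value $\rho(t_0 + s)$ equals $\dim\mathrm{span}(V_1(s), \ldots, V_k(s))$, so it suffices to show that the dimension of the space of linear relations
$$K(s) \;:=\; \bigl\{\, c \in \mathbb{C}^k \,\bigm|\, \sum_{i} c_i V_i(s) = 0 \,\bigr\}$$
is independent of $s$.

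For this last step I would introduce the adjoint linear ODE $c_j'(s) = -\sum_i c_i(s)\, h_{ij}(\gamma_x(t_0 + s))$ on $\mathbb{C}^k$. A direct computation shows that if $c(\cdot)$ solves this ODE then $G(s) := \sum_i c_i(s) V_i(s)$ satisfies $G'(s) \equiv 0$, so $c(s_0) \in K(s_0)$ implies $c(s) \in K(s)$ for all $s$. Since the time-$s$ flow of a linear ODE is a linear automorphism of $\mathbb{C}^k$, it restricts to an isomorphism $K(s_0) \xrightarrow{\sim} K(s)$, and hence $\dim K(s)$ is constant. The only delicate point is arranging the local generators and the flow domain coherently at the start; once that setup is in place the argument reduces to the purely formal ODE computation above, with the Lie-closedness of $\sF$ entering exactly once, to supply the structure constants $h_{ij}$ that close the system.
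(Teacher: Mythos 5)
Your argument is correct, and it reaches the conclusion by a genuinely different route from the paper. The paper does not prove local constancy of the rank directly: it first uses semicontinuity to observe that the locus $\Delta_q=\gamma_x^{-1}(X_q)$ of \emph{minimal} rank $q$ along the curve is closed and analytic, and then shows openness of $\Delta_q$ by proving only the one-sided inequality $\dim W_{\gamma_x(t)}\le \dim W_x$; the reverse inequality comes for free from the minimality of $q$, and an open--closed argument on $\Delta$ finishes the proof. The inequality itself is obtained by Taylor-expanding $\Gamma(t')=(\phi_{t'}^*\circ D_i)(\phi_{t'}(x))$ and noting that all derivatives $\Gamma^{(n)}(0)=[D,D_i]^{(n)}(x)$ lie in the finite-dimensional (hence closed) subspace $W_x$, involutivity entering through the iterated brackets. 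You instead close the system at the level of the generators: writing $[D,D_i]=\sum_j h_{ij}D_j$ turns the pulled-back generators $V_i(s)=(\phi_s^*D_i)(y_0)$ into solutions of a linear ODE with holomorphic coefficients, and the adjoint-system trick shows the space of linear relations $K(s)$ has constant dimension, which gives local constancy of the rank in both directions at once. Both proofs rest on the same Lie-derivative identity $\frac{d}{ds}\phi_s^*D_i=\phi_s^*[D,D_i]$ and on the (shared, implicit) local finite generation of $\sF$; what your version buys is that it dispenses with the semicontinuity/minimality bookkeeping and with the Taylor-series step, at the cost of needing the structure functions $h_{ij}$ on a suitable neighborhood, which a shrinking of $V$ (or a Stein/coherence argument, as you indicate) supplies. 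The one point worth stating explicitly is why the adjoint flow maps $K(s_0)$ \emph{onto} $K(s)$ and not merely into it --- this follows by running the same argument backwards in $s$ --- but that is a one-line addition, not a gap.
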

\begin{proof}
  Let $q \leq r$ be the minimal number such that the set $\Delta_q :=
  \gamma_x^{-1}(X_q)$ is not empty. By semicontinuity, $\Delta_q \subseteq
  \Delta$ is a closed analytic subset, and to prove
  Proposition~\ref{prop:flowstabxr}, it suffices to show that $\Delta_q$ is
  also open. Using the fundamental property that $\gamma_{\gamma(t_0)}(t) =
  \gamma_x(t+t_0)$ for all $t_0 \in \Delta_q$ and all sufficiently small
  numbers $t$, we can assume without loss of generality that $0 \in \Delta_q$
  and $r=q$. For the same reason, it suffices to show that $\Delta_q$ contains
  a neighborhood $\Delta'$ of $0 \in \Delta$.

  To that end, we will show that near $0 \in \Delta$, the local group action
  induced by $D$ yields an injective linear map from
  $\Image(\sF|_{\gamma_x(t)} \to T_X|_{\gamma_x(t)})$ to a $q$-dimensional
  vector space, for every sufficiently small number $t$. Shrinking $U$, if
  necessary, we can assume without loss of generality that the sheaf
  $\sF|_{U}$ is generated by vector fields $D_1,\dots, D_s \in H^0 \bigl( U,\,
  \sF \bigr)$. The vector field $D$ induces a local group action $\phi :
  \Delta' \times V \to U$, where $V \subseteq U$ and $\Delta' \subseteq
  \Delta$ are suitably small open neighborhoods of $x$ and $0$, respectively.
  
  To prove Proposition~\ref{prop:flowstabxr}, we need to show that $\Delta'
  \subseteq \Delta_q$. For this, pick any element $t \in \Delta'$ and set $y
  := \gamma_x(t)$. We consider the vector spaces
  $$
  W_x :=\left < D_1(x),\dots,D_s(x)\right > \subseteq T_X|_x \quad \mbox{and}
  \quad W_y :=\left < D_1(y),\dots,D_s(y)\right > \subseteq T_X|_y.
  $$
  Since $W_x = \Image(\sF|_x \to T_X|_x)$, the dimension of $W_x$ equals
  $r=q$, and since $q$ is chosen minimal, Proposition~\ref{prop:flowstabxr} is
  shown once we prove that $\dim W_y \leq q$.  In order to relate the spaces
  $W_x$ and $W_y$ we consider the open immersion $\phi_t : V \to U$,
  $\phi_t(v) := \phi(t, v)$, whose pull-back morphism yields an isomorphism of
  vector spaces $\phi_t^* : T_X|_y \to T_X|_x$. 

  To understand the morphism $\phi_t^*$ better, let $D_i(y)$ be any generator
  of $W_y$, and define the map
  $$
  \begin{tabular}{rccc}
    $\Gamma$ : & $\Delta'$ & $\to$ & $T_X|_x$ \\
    & $t'$ & $\mapsto$ & $(\phi_{t'}^* \circ D_i) \bigl(\phi_{t'}(x) \bigr)$
  \end{tabular}
  $$
  and notice that $\Gamma(0) = D_i(x)\in W_x$ and $\Gamma(t) = \phi_{t}^*
  \bigl( D_i(y) \bigr)$. Since $\phi_t^*$ is injective, it remains to prove
  that $\Gamma(t)$ is an element of $W_x$. Recall from \cite[Def.~2.24,
  Prop~2.25]{Warner83} that $\Gamma$ is analytic and that its derivative is
  $$
  \Gamma'(t') = (\phi_{t'}^*\circ[D,\,D_i]) \bigl(\phi_{t'}(x)\bigr).
  $$
  In particular, we have that $\Gamma'(0) = [D,\,D_i](x)$ is an element of
  $W_x$.  It follows by induction that the higher order derivatives are given
  by
  $$
  \Gamma^{(n)}(t') = (\phi_{t'}^*\circ[D ,\,D_i]^{(n)})\bigl(\phi_{t'}(x)
  \bigr).
  $$
  In particular, we have that $\Gamma^{(n)}(0) = [D,\,D_i]^{(n)}(x)$ is an
  element of $W_x$, for all numbers $n$.  Expanding $\Gamma$ in a Taylor
  series, it follows that $\Gamma(t')$ is an element of $W_x$, for all $t' \in
  \Delta'$. 

  In summary, we see that the isomorphism $\phi_t^* : T_X|_y \to T_X|_x$ maps
  each generator $D_i(y)$ of $W_y$ to $W_x$. As a consequence, we obtain $\dim
  W_y \leq \dim W_x = q$, as claimed. This ends the proof of
  Proposition~\ref{prop:flowstabxr}.
\end{proof}

\begin{cor}[Frobenius Theorem for $\sF$]\label{cor:newfrob}
  If $r$, $s$ are any two numbers such that $Z := Z_r^s$ is not empty, then
  \begin{enumerate}
  \item\ilabel{il:AA} the image of $\sF$ along $Z$ is contained in the tangent
    bundle of $Z$, i.e.,
    $$
    \sF_Z := \Image( \sF|_Z \to T_X|_Z) \subseteq T_Z,
    $$
  \item\ilabel{il:BB} the sheaf $\sF_Z \subseteq T_Z$ is a regular foliation,
    and
  \item\ilabel{il:CC} every point $z \in Z$ admits an open neighborhood $U =
    U(z) \subseteq Z$ with a product structure, $U = A \times B$ such that
    $\sF_Z \cong \pi_A^*(T_A)$, where $\pi_A : A \times B \to A$ is the
    projection to the first factor.
  \end{enumerate}
\end{cor}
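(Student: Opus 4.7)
The plan is to prove the three assertions in order; the bulk of the work lies in \iref{il:AA}, after which \iref{il:BB} follows from the constant-rank hypothesis defining $Z$ together with Lie-closedness of $\sF$, and \iref{il:CC} is an immediate application of the classical Frobenius Theorem~\ref{thm:frob} to the regular foliation $\sF_Z$ on the complex manifold $Z$.

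For \iref{il:AA}, I would argue by induction on $i$ that every local flow map $\phi_t$ associated with a section $D \in H^0\bigl(U,\,\sF\bigr)$ preserves the nested closed analytic subsets $X_r^i$. The base case $i = 0$ is exactly Proposition~\ref{prop:flowstabxr}, which says $\phi_t(V \cap X_r) \subseteq X_r$. For the induction step, $\phi_t$ is a local biholomorphism of $X$ and, by the inductive hypothesis, restricts to a biholomorphism between open subsets of $X_r^i$ with its induced analytic structure; since the singular locus of an analytic space is an intrinsic invariant, $\phi_t$ must carry $X_r^{i+1}$ into itself. Consequently $\phi_t$ preserves $Z = X_r^s \setminus X_r^{s+1}$. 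Given any $z \in Z$ and any local section $D \in H^0\bigl(U,\,\sF\bigr)$ defined near $z$, the integral curve $\gamma_z : \Delta \to X$ of $D$ through $z$ therefore lies entirely in $Z$; hence its initial tangent vector $D(z) = \gamma_z'(0)$ belongs to $T_Z|_z$. Running this argument over a local generating set of $\sF$ yields $\sF_Z \subseteq T_Z$.

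For \iref{il:BB}, pick local generators $D_1, \dots, D_s \in H^0\bigl(U,\,\sF\bigr)$ of $\sF|_U$ near a point $z \in Z$. By \iref{il:AA}, the restrictions $D_i|_{U \cap Z}$ are holomorphic sections of $T_Z$, and by definition of $X_r \supseteq Z$ their values span a subspace of $T_Z|_y$ of dimension exactly $r$ at every point $y \in U \cap Z$. Hence $\sF_Z$ is locally generated by $r$ everywhere linearly independent sections of $T_Z$, making it a rank-$r$ vector subbundle of $T_Z$. Lie-closedness transfers from $\sF$ to $\sF_Z$ because $[D_i|_Z,\, D_j|_Z] = [D_i, D_j]|_Z$ is the restriction of a section of $\sF$ to $Z$, which by \iref{il:AA} is a section of $\sF_Z$. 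Part \iref{il:CC} is then just Theorem~\ref{thm:frob} applied to $(Z,\,\sF_Z)$.

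The main obstacle is the inductive step in \iref{il:AA}: one must verify that the local flows of sections of $\sF$ preserve not merely the rank stratum $X_r$ (for which Proposition~\ref{prop:flowstabxr} was designed) but every iterated singular locus $X_r^i$. The key observation is that each $\phi_t$ is a biholomorphism between open subsets of $X$, so once it is known (inductively) to restrict to a biholomorphism of $X_r^i$ it must map the intrinsically defined singular locus of $X_r^i$, namely $X_r^{i+1}$, into itself.
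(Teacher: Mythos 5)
Your proof is correct and follows essentially the same route as the paper's: Proposition~\ref{prop:flowstabxr} combined with the fact that the local flows are open immersions (hence stabilize each iterated singular locus $X_r^i$) gives the containment $\sF_Z \subseteq T_Z$, constancy of rank on $X_r$ gives the subbundle structure, restriction of Lie brackets gives involutivity, and the classical Frobenius Theorem~\ref{thm:frob} gives the local product structure. The only difference is presentational: you spell out the induction on the singular-locus filtration that the paper compresses into a single sentence.
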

\begin{proof}
  Let $U \subseteq X$ be any open subset of $X$, and let $D \in H^0 \bigl(
  U,\, \sF \bigr)$ be any vector field, with an associated local group action
  $\phi: \Delta \times V \to X$, where $\Delta$ is again a sufficiently small
  disk and $V \subseteq U$ a suitable open subset that contains $x$. By
  Proposition~\ref{prop:flowstabxr}, we know that for any point $x' \in V$ and
  any $t \in \Delta$, we have $\phi(t, x') \in X_r$ if and only if $x' \in
  X_r$. In fact, more is true: since the morphisms $\phi(t, \cdot) : V \to X$
  are open immersions, they must stabilize the singular set of $X_r$.
  Eventually, it follows that for any number $s$, we have $\phi(t, x') \in
  X_r^s$ if and only if $x' \in X_r^s$. Since
  $$
  D|_x = \bigl( \textstyle \frac{\partial}{\partial t} \phi\bigr)(0, x) =
  \bigl(\textstyle \frac{\partial}{\partial t} \gamma_x \bigr)(0),
  $$
  this implies Claim~\iref{il:AA}.

  By definition of $X_r$, it is clear that $\sF_Z$ is a vector subbundle of
  $T_Z$. The assertion that $\sF_Z$ is closed under the Lie bracket of $T_Z$
  follows from Claim~\iref{il:AA} and a standard computation,
  cf.~\cite[Prop.~1.55]{Warner83}, giving
  Claim~\iref{il:BB}. Claim~\iref{il:CC} follows when one applies the
  classical Frobenius Theorem~\ref{thm:frob} to $\sF_Z \subseteq T_Z$.
\end{proof}

Using Corollary~\ref{cor:newfrob}, we can now define the analytic space
$\Hom_{\sF}\bigl(Y,\, X \bigr)$ which parametrizes deformations along
$\sF$. The following notation is useful for the description of its universal
properties.

\begin{defn}[Infinitesimal deformations that are pointwise induced by a subsheaf]\label{def:pibss}
  Let $\sigma_n : Y\to f^*\Jet^n(X)$ be an $n$th order infinitesimal
  deformation of the morphism $f$. We say that $\sigma_n$ is \emph{pointwise
    induced by vector fields in $\sF$}, if for any point $y \in Y$ there is a
  neighborhood $U \subseteq X$ of $f(y)$ and a vector field $D \in H^0\bigl(U,
  \, \sF \bigr)$ such that $\sigma_n(y) = \tau^n_D\bigl( f(y) \bigr)$, where
  $\tau^n_D : U \to \Jet^n(X)$ is the section in the $n$th jet bundle
  described in Definition~\ref{def:jetvec}.
\end{defn}

\begin{cor}[Existence of a parameter space for deformations along a subsheaf]\label{cor:analyticset}
  There exists a locally closed analytic subspace $\Hom_{\sF}\bigl(Y,\, X
  \bigr) \subseteq \Hom\bigl(Y,\, X \bigr)$ which contains the morphism $f$
  and has the following universal properties.
  \begin{enumerate-p}
  \item\label{il:laurel} If $\sigma_n$ is an $n$th order infinitesimal
    deformation of the morphism $f$ which is pointwise induced by vector
    fields in $\sF$, then the associated morphism $\Spec \mathbb
    C[\varepsilon]/(\varepsilon^{n+1}) \to \Hom\bigl(Y,\, X \bigr)$ factors
    via $\Hom_{\sF}\bigl(Y,\, X\bigr)$.
  \item\label{il:hardy} If $\gamma: \Delta \to \Hom_{\sF}\bigl(Y,\, X \bigr)$
    is any arc with $\gamma(0) = f$, and if $F: \Delta \times Y \to X$ is the
    associated deformation, then $F$ is a deformation along $\sF$, in the
    sense of Notation~\ref{not:defalongf}.
  \end{enumerate-p}
\end{cor}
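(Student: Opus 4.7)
The plan is to construct $\Hom_{\sF}(Y,X)$ as a locally closed analytic subspace of $\Hom(Y,X)$ in a neighborhood $\Omega$ of $[f]$ by imposing, stratum by stratum, finitely many analytic equations derived from the local product decompositions of Corollary~\ref{cor:newfrob}; the two universal properties will then follow by pointwise differentiation of these equations.

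For the construction, observe first that the compact set $f(Y)$ meets only finitely many strata of the decomposition $X=\bigcup^{\bullet}_{r,s}Z_r^s$, since the stratification is locally finite. For each $y\in Y$, denote by $Z(y)$ the stratum containing $f(y)$, and apply Corollary~\ref{cor:newfrob} to choose an open chart $\tilde U_y\subseteq X$ around $f(y)$ with holomorphic coordinates $(a_y,b_y,c_y)$ so that $Z(y)\cap\tilde U_y=\{c_y=0\}$ (shrinking $\tilde U_y$ so that no stratum of smaller rank in $\overline{Z(y)}$ meets it) and so that the leaves of $\sF_{Z(y)}$ in this chart are $\{b_y=\mathrm{const},\ c_y=0\}$. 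Using the compactness of $Y$, extract a finite collection $y_1,\dots,y_N$ such that, setting $W_i:=f^{-1}(\tilde U_{y_i})$ and $K_i:=W_i\cap f^{-1}(Z(y_i))$ (closed in $W_i$), the sets $K_i$ cover $Y$; shrink to $W_i'\Subset W_i$ still covering $Y$ and to a neighborhood $\Omega$ of $[f]$ in $\Hom(Y,X)$ on which $g(\overline{W_i'})\subseteq\tilde U_{y_i}$ for every $g\in\Omega$ and every $i$. On $\Omega$, impose for each $i$ the closed analytic conditions $c_{y_i}\circ g=0$ and $b_{y_i}\circ g=b_{y_i}\circ f$ on the closed subset $K_i\cap\overline{W_i'}\subseteq Y$; via the holomorphic universal evaluation map $(K_i\cap\overline{W_i'})\times\Omega\to\tilde U_{y_i}$ these translate into holomorphic equations on $\Omega$. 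The common zero locus over all $i$ is defined to be $\Hom_{\sF}(Y,X)\cap\Omega$, a locally closed analytic subspace of $\Hom(Y,X)$ containing $[f]$.

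For universal property~\ref{il:laurel}, suppose $\sigma_n$ is pointwise induced by $\sF$. At any $y\in K_i\cap\overline{W_i'}$, write $\sigma_n(y)=\tau^n_D(f(y))$ for a local vector field $D\in\sF$ near $f(y)$. By Proposition~\ref{prop:flowstabxr} (together with the stratum-refined stabilization established inside the proof of Corollary~\ref{cor:newfrob}) the integral curve of $D$ through $f(y)$ stays inside $Z(y_i)$, so $c_{y_i}$ vanishes along it to all orders; and by Corollary~\ref{cor:newfrob}, part~(2), this curve remains on a single $\sF_{Z(y_i)}$-leaf, so $b_{y_i}$ is constant along it. The $n$-jet of the defining equations therefore vanishes at $\sigma_n$, and the associated morphism $\Spec\,\mathbb{C}[\varepsilon]/(\varepsilon^{n+1})\to\Hom(Y,X)$ factors through $\Hom_{\sF}(Y,X)$. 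For universal property~\ref{il:hardy}, let $\gamma$ be an arc in $\Hom_{\sF}$ with associated deformation $F$; the equations $c_{y_i}\circ F_t=0$ and $b_{y_i}\circ F_t=b_{y_i}\circ f$ hold for every $t\in\Delta$ and every $y\in K_i\cap\overline{W_i'}$. In particular $F_t(y)\in Z(y_i)$, and differentiating in $t$ at any $t_0$ yields $dc_{y_i}(\sigma_{F,t_0}(y))=0$ and $db_{y_i}(\sigma_{F,t_0}(y))=0$, so $\sigma_{F,t_0}(y)$ lies in the $A_{y_i}$-direction, which coincides with $\Image(\sF|_{F_{t_0}(y)}\to T_X|_{F_{t_0}(y)})$.

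The main obstacle is the construction step. The stratum $Z(y)$ varies with the point $y\in Y$, and each chart $\tilde U_{y_i}$ is adapted only to its own stratum $Z(y_i)$. For points $y\in W_i'$ whose image $f(y)$ lies in a different (necessarily higher-rank, by lower-semicontinuity of rank) stratum, the equation $c_{y_i}\circ g=0$ is not satisfied even by $g=f$, so the conditions must be restricted to the closed subset $K_i\cap\overline{W_i'}\subseteq Y$. One must then arrange the finitely many stratum-adapted charts so that the sets $K_i$ cover $Y$ and so that the restricted analytic equations glue into a single well-defined locally closed analytic subspace of $\Hom(Y,X)$; the locally finite stratification of $X$ together with the compactness of $f(Y)$ make this possible, and Proposition~\ref{prop:flowstabxr} then ensures that the subspace so constructed captures precisely the deformations of $f$ along $\sF$.
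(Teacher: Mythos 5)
Your overall strategy---cutting $\Hom_{\sF}(Y,X)$ out of a neighbourhood of $[f]$ by requiring that a nearby morphism $g$ map $f^{-1}(Z_r^s)$ into $Z_r^s$ and preserve the transverse coordinates of the local Frobenius product decompositions of Corollary~\ref{cor:newfrob}---is exactly the one the paper uses, and your verification of the two universal properties would go through \emph{if} the construction step worked as announced. The genuine gap is the covering claim that you yourself flag as ``the main obstacle'': in general one \emph{cannot} choose finitely many stratum-adapted charts so that the sets $K_i=W_i\cap f^{-1}\bigl(Z(y_i)\bigr)$ cover $Y$. Each $f^{-1}(Z_r^s)$ is only locally closed in $Y$, hence in general not compact, while every $Z_r^s$-adapted chart $\tilde U_{y}$ must (as you note) be shrunk away from the lower-rank strata in $\overline{Z_r^s}$, so that the open sets $f^{-1}(\tilde U_y)$, $y\in f^{-1}(Z_r^s)$, cover only the non-compact set $f^{-1}(Z_r^s)$ and admit no finite subcover of it. For instance, if $\sF$ is generated by a single vector field vanishing exactly at a point $p\in f(Y)$ with $f(Y)\not\subseteq\{p\}$, then $f^{-1}(X_1)=Y\setminus f^{-1}(p)$ is non-compact, every rank-one-adapted product chart avoids $p$, and generically no finite union of such charts pulls back to cover $f^{-1}(X_1)$. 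Compactness of $Y$ does give finitely many $W_i$ covering $Y$, but a point $y\in W_i$ need not lie in $K_i$, because $f(y)$ may sit in a different stratum than $f(y_i)$; at the uncovered points your pointwise differentiation argument for property~(\ref{cor:analyticset}.\ref{il:hardy}) then has no equations to differentiate.

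The paper resolves this differently: for each pair $(r,s)$ and each irreducible component $Y'\subseteq f^{-1}(Z_r^s)$ it imposes the two families of conditions only on a relatively compact neighbourhood $U$ of a \emph{single general point} $y_0\in Y'$, where one adapted chart suffices, and then, for property~(\ref{cor:analyticset}.\ref{il:hardy}), propagates the conclusion $\sigma_{F,t}|_{U}\in H^0\bigl(U,\,(F_t)^*(\sF_{Z_r^s})\bigr)$ from $U$ to all of $Y'$ by the identity principle, using that $\sF_{Z_r^s}$ is a vector subbundle of $T_{Z_r^s}$ and that $Y'$ is irreducible; since the components $Y'$, taken over all $(r,s)$, exhaust $Y$, this yields the property everywhere without ever needing the problematic cover. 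To repair your argument you should either adopt this identity-principle step or explain how to treat the points of $\overline{f^{-1}(Z_r^s)}\setminus f^{-1}(Z_r^s)$. Two smaller points: passing from the uncountably many conditions ``$c_{y_i}\circ g=0$ and $b_{y_i}\circ g=b_{y_i}\circ f$ for all $y$'' to a single analytic subspace requires the analytic Basissatz \cite[Prop.~23.1]{KaupKaup}, since an infinite intersection of analytic subspaces is not automatically analytic; and the resulting spaces may well be non-reduced, which matters when one checks property~(\ref{cor:analyticset}.\ref{il:laurel}) on the level of $n$-jets.
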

\begin{proof}
  We begin with the construction of the space $\Hom_{\sF}\bigl(Y,\, X\bigr)$.
  Choose numbers $r$, $s$ with $f(Y) \cap Z_r^s \not = \emptyset$, an
  irreducible component $Y' \subseteq f^{-1}(Z_r^s)$, a general point $y_0 \in
  Y'$ and a neighborhood $V = V \bigl(f(y_0)\bigr) \subseteq X$, with a
  decomposition $V \cap Z_r^s = A \times B$ as in
  Corollary~\ref{cor:newfrob}. Let $U= U(y_0) \Subset Y' \cap f^{-1}(V)$ be a
  relatively compact neighborhood. By relative compactness of $U$, there
  exists an analytically open neighborhood $H_{r,s, Y'}^1 \subseteq \Hom\bigl(
  Y,\, X \bigr)$ of $f \in \Hom\bigl( Y,\, X \bigr)$ such that $g(y) \in V$
  for all points $y \in U$ and all morphisms $g \in H_{r,s, Y'}^1$. The set
  \begin{equation}\label{eq:valentin}
    H_{r,s, Y'}^2 := \bigcap_{y \in U} \bigl\{ g \in H_{r,s, Y'}^1 \,|\, g(y)
    \in Z_r^s \bigr\} \subseteq H_{r,s, Y'}^1
  \end{equation}
  is then the intersection of finitely or infinitely many analytic subspaces,
  and therefore, by the analytic version of Hilbert's Basissatz
  \cite[Prop.~23.1]{KaupKaup}, an analytic subspace itself.  We remark that
  neither $H_{r,s,Y'}^2$ nor any of the spaces on the right hand side
  of~\eqref{eq:valentin} are necessarily reduced.

  Identifying $V \cap Z_r^s \cong A\times B$, with projection $\pi_B: A \times
  B \to B$, we can then consider the following analytic subspace of
  $H_{r,s,Y'}^2$,

  \begin{equation}\label{eq:ssp}
    H_{r,s, Y'} := \bigcap_{y \in U} \bigl\{ g \in H_{r,s, Y'}^2 \,|\,
    (\pi_B \circ g)(y) = (\pi_B \circ f)(y) \bigr\} \subseteq H_{r,s, Y'}^2.
  \end{equation}

  In order to define the subspace $\Hom_{\sF}\bigl(Y,\, X \bigr) \subseteq
  \Hom\bigl(Y,\, X \bigr)$, repeat this construction for each of the finitely
  many numbers $r$ and $s$, and for each of the finitely many components $Y'
  \subseteq f^{-1}(Z_r^s)$. Finally, let $\Hom_{\sF}\bigl(Y,\, X \bigr)$ be
  the connected component of the intersection which contains $f$,
  $$
  \Hom_{\sF}\bigl(Y,\, X \bigr) \subseteq \bigcap_{r,s,Y'} H_{r,s, Y'}
  \subseteq \underbrace{\bigcap_{r,s,Y'} H_{r,s, Y'}^1}_{\text{open in
      $\Hom(Y, X)$}} \subseteq \Hom\bigl(Y,\, X \bigr).
  $$
  It remains to show that the Universal Properties~(\ref{cor:analyticset}.\ref{il:laurel}) and
  (\ref{cor:analyticset}.\ref{il:hardy}) hold.

  \medskip

  For Property~(\ref{cor:analyticset}.\ref{il:laurel}), assume that an $n{\rm th}$ order deformation
  $\sigma_n$ is given as in (\ref{cor:analyticset}.\ref{il:laurel}). Given any two numbers $r$, $s$
  and any connected component $Y' \subseteq f^{-1}(Z_r^s)$, let $V \subseteq
  X$ and $U \Subset Y' \cap f^{-1}(V)$, be the sets considered above in the
  construction of $H_{r,s, Y'}$, with decomposition $V \cap Z_r^s \cong A
  \times B$. Now, if $y \in U$ is any point and $D$ the associated vector
  field near $f(y)$, with integral curve $\gamma_{f(y)}: \Delta \to X$, it is
  clear from Corollary~\ref{cor:newfrob} that $\gamma_{f(y)}(t) \in Z_r^s$,
  for all $t$. In particular, the associated morphism
  \begin{equation}\label{eq:lst}
    \sigma_n : \Spec \mathbb C[\varepsilon]/(\varepsilon^{n+1}) \to
    \Hom\bigl( Y,\, X\bigr)
  \end{equation}
  factors via $H_{r,s, Y'}^2$. In a similar vein, it follows from
  Corollary~\ref{cor:newfrob} that $\pi_B \circ \gamma_{f(y)}(t) \equiv \pi_B
  \bigl( f(y) \bigr)$ for all $t \in \Delta$. In particular, viewing
  $\sigma_n$ as a map $\sigma_n : \Spec \mathbb
  C[\varepsilon]/(\varepsilon^{n+1})\times Y \to X$, we have
  $$
  \pi_B \circ \bigl( \sigma_n|_{\Spec \mathbb C[\varepsilon] /
    (\varepsilon^{n+1})\times \{y\}} \bigr) \equiv \pi_B \bigl( f(y) \bigr),
  $$
  so that the morphism~\eqref{eq:lst} actually factors via $H_{r,s,
    Y'}$. Since this is true for all $r$, $s$ and $Y'$, the
  morphism~\eqref{eq:lst} factors via $\Hom_{\sF}\bigl(Y,\, X\bigr)$, as
  claimed. This ends the proof of Property~(\ref{cor:analyticset}.\ref{il:laurel}).

  \medskip

  To prove Property~(\ref{cor:analyticset}.\ref{il:hardy}), let $\gamma$ be
  any arc that satisfies the conditions of
  (\ref{cor:analyticset}.\ref{il:hardy}) and let $F$ be the associated
  deformation. For $t \in \Delta$, let
  $$
  \sigma_{F, t} \in H^0 \bigl( Y,\, (F_t)^*(T_X)\bigr)
  $$
  be the velocity vector field, as introduced in Notation~\ref{not:infdef} on
  page \pageref{not:infdef}. We aim to show that the $\sigma_{F, t}$ are
  really sections in $(F_t)^*(\sF)$.  Again, if any two numbers $r$, $s$ and
  any connected component $Y' \subseteq f^{-1}(Z_r^s)$ are given, it is clear
  from \eqref{eq:valentin} and \eqref{eq:ssp} that
  $$
  \sigma_{F, t}|_{U} \in H^0 \bigl( U,\, (F_t)^*(\sF_{Z_r^s})\bigr),
  $$
  where $\sF_{Z_r^s}$ is the sheaf introduced in Corollary~\ref{cor:newfrob}.
  Since $U$ is analytically open in the irreducible space $Y' \subseteq
  f^{-1}(Z_r^s)$ and since we have seen in Corollary~\ref{cor:newfrob} that
  $\sF_{Z_r^s}$ is a vector bundle, it follows immediately from the identity
  principle that
  $$
  \sigma_{F, t}|_{Y'} \in H^0 \bigl( Y' ,\, (F_t)^*(\sF_{Z_r^s})\bigr).
  $$
  Since this holds for all numbers $r$ and $s$, and all irreducible components
  $Y' \subseteq f^{-1}(Z_r^s)$,
  Property~(\ref{cor:analyticset}.\ref{il:hardy}) follows.
\end{proof}

\section{Proof of Theorem~\ref*{thm:mainresult}}
\label{sec:proof}

\subsection{Setup of notation, overview of the proof}

We end this paper with the proof of Theorem~\ref{thm:mainresult}. Throughout
the present Section~\ref{sec:proof}, we maintain the assumptions and the
notation of the theorem. In particular, we assume that we are given a morphism
$f :Y \to X$ of complex manifolds, with $Y$ compact, an involutive subsheaf
$\sF \subseteq T_X$ and a first order infinitesimal deformation of $f$,
denoted $\sigma \in H^0 \bigl( Y,\, \sF_Y \bigr)$, where $\sF_Y \subseteq
f^*(T_X)$ is the image of $f^*(\sF)$ under the pull-back of the inclusion
map. We also assume that $H^1\bigl( Y,\, \sF_Y \bigr) = \{0\}$.

The proof is given in three steps. Replacing the target manifold $X$ with the
product $Y \times X$, and the morphism $f$ with the natural graph map, we
first show that it suffices to prove Theorem~\ref{thm:mainresult} in the case
where $f$ is a closed immersion. In Step 2, we construct a setting where the
tangent vectors $\sigma(y) \in T_X|_y$ and the vector spaces $T_Y|_y \subseteq
T_X|_y$ are transversal at all points $y \in Y$. A third step will then
complete the proof.

\subsection{Step 1: Reduction to the case of a closed immersion}
\label{subsec:immersion}

In Section~\ref{ssec:outline} we have discussed the situation where $f$ is
a closed immersion, and where the infinitesimal deformation $\sigma$ was
locally given by restrictions of vector fields that live on open subsets of
$X$. In order to reduce to this simpler situation, we will show that to give a
deformation of $f$, it is equivalent to give a relative deformation of the
graph morphism,
$$
\iota: Y \to Y \times X \text{\quad where \quad} \iota(y) = \bigl(y, f(y)
\bigr),
$$
which is a closed immersion that identifies the domain $Y$ with the graph of
$f$. We will then aim to construct an involutive subsheaf $\sG \subseteq
T_{Y\times X}$ that comes from $\sF$, and an infinitesimal deformation of the
graph morphism $\iota$ along the sheaf $\sG$ that is related to $\sigma$.

For this, recall that the tangent bundle of the product is a direct sum
$T_{Y\times X} = \pi_Y^*(T_Y) \oplus \pi_X^*(T_X)$, where the $\pi_{\bullet}$
are the natural projections, and set $\sG := \{0\} \oplus \pi_X^*(\sF)
\subseteq T_{Y\times X}$. Since $\sG$ is generated by vector fields that are
$\pi_X$-related to vector fields in $\sF$, it follows from
\cite[Prop.~1.55]{Warner83} that $\sG$ is closed under the Lie bracket.
Finally, consider the first order infinitesimal deformation $\sigma_\iota:
Y\to \iota^*(T_{Y\times X})$ of $\iota$, given by $\sigma_\iota (y) := (0,
\sigma(y))$. 

The following lemmas are then immediate from the construction.

\begin{lem}\label{lem:kwieck}
  The infinitesimal deformation $\sigma_\iota$ is contained in the subspace
  $H^0 \bigl( Y, \Image (\iota^*(\sG) \to \iota^*(T_{X\times Y})) \bigr)
  \subseteq H^0 \bigl( Y, \iota^*(T_{X\times Y}) \bigr)$. \qed
\end{lem}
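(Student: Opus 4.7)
The plan is a straightforward unwinding of the definitions given immediately above the lemma statement. First I would use the functorial identities $\pi_Y \circ \iota = \mathrm{id}_Y$ and $\pi_X \circ \iota = f$ to transport the direct sum decomposition $T_{Y\times X} = \pi_Y^*(T_Y) \oplus \pi_X^*(T_X)$ under the pull-back $\iota^*$. This produces a canonical identification $\iota^*(T_{Y\times X}) \cong T_Y \oplus f^*(T_X)$, compatible with the decomposition that was used to define the subsheaf $\sG$.

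Under this identification, the definition $\sG := \{0\} \oplus \pi_X^*(\sF)$ pulls back to $\iota^*(\sG) \cong \{0\} \oplus f^*(\sF)$. Consequently, the image of the natural morphism $\iota^*(\sG) \to \iota^*(T_{Y\times X})$ is precisely the subsheaf $\{0\} \oplus \sF_Y$, where $\sF_Y$ is the image of $f^*(\sF) \to f^*(T_X)$ as in the statement of Theorem~\ref{thm:mainresult}.

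With these identifications in hand, the verification is immediate. By construction $\sigma_\iota(y) = \bigl(0,\sigma(y)\bigr)$, and by hypothesis $\sigma \in H^0(Y,\sF_Y)$, so $\sigma_\iota$ takes values in $\{0\} \oplus \sF_Y$, which is exactly the claimed subspace of global sections. No serious obstacle is expected—the argument is essentially a diagram chase—so the only point that requires care is making the canonical identifications of the pulled-back bundles fully explicit so that the direct summand $\{0\}$ on the $T_Y$-side is unambiguous.
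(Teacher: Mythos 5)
Your proposal is correct and is exactly the argument the paper intends: the lemma is left as "immediate from the construction," and the identifications you make explicit ($\iota^*(T_{Y\times X})\cong T_Y\oplus f^*(T_X)$, hence $\Image(\iota^*(\sG)\to\iota^*(T_{Y\times X}))\cong\{0\}\oplus\sF_Y$) are precisely those recorded in the adjacent Lemma~\ref{lem:brot}. Nothing is missing.
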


\begin{lem}\label{lem:brot}
  There exist natural isomorphisms $\iota^*(\sG) \cong f^*(\sF)$ and 
  $$
  \Image \bigl(\iota^*(\sG) \to \iota^*(T_{X\times Y})\bigr) \cong \Image
  \bigl( f^*(\sF) \to f^*(T_X)\bigr) =: \sF_Y.
  $$
  In particular, we have
  $H^1\bigl(Y,\,\Image(\iota^*(\sG)\to\iota^*(T_{X\times Y}))\bigr) =
  H^1\bigl(Y,\, \sF_Y \bigr) = \{0\}$. \qed
\end{lem}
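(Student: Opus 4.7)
The plan is to unwind definitions: since $\sG$ is built directly from $\pi_X^*(\sF)$ and $\pi_X\circ\iota = f$, both claimed isomorphisms should follow purely from functoriality of pull-back, and the cohomological consequence is then immediate.

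First I would identify $\sG = \{0\}\oplus\pi_X^*(\sF)$ with $\pi_X^*(\sF)$, regarded as a subsheaf of the summand $\pi_X^*(T_X)\subseteq T_{Y\times X}$. The equality $\pi_X\circ\iota = f$ together with functoriality of pull-back then yields
$$
\iota^*(\sG) \;=\; \iota^*\bigl(\pi_X^*(\sF)\bigr) \;=\; (\pi_X\circ\iota)^*(\sF) \;=\; f^*(\sF),
$$
which is the first asserted isomorphism.

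Next, for the image identification, I would factor the inclusion $\sG\hookrightarrow T_{Y\times X}$ as the composition
$$
\pi_X^*(\sF) \;\longrightarrow\; \pi_X^*(T_X) \;\hookrightarrow\; \pi_Y^*(T_Y)\oplus\pi_X^*(T_X),
$$
where the first arrow is $\pi_X^*$ applied to the inclusion $\sF\hookrightarrow T_X$ and the second is the inclusion of a direct summand. Applying $\iota^*$ and using that pull-back commutes with the direct sum decomposition of $T_{Y\times X}$, the summand inclusion remains a summand inclusion, and the composed map is identified with
$$
f^*(\sF)\;\longrightarrow\; f^*(T_X)\;\hookrightarrow\; \iota^*(T_{Y\times X}).
$$
Since the second arrow is injective, the image of $\iota^*(\sG)\to \iota^*(T_{Y\times X})$ is carried isomorphically onto the image of $f^*(\sF)\to f^*(T_X)$, which by definition is $\sF_Y$. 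The cohomological conclusion then follows at once: a natural isomorphism of sheaves induces a canonical isomorphism on cohomology, so
$$
H^1\bigl(Y,\,\Image(\iota^*(\sG)\to\iota^*(T_{X\times Y}))\bigr) \;=\; H^1(Y,\sF_Y),
$$
and this vanishes by the standing hypothesis of Theorem~\ref{thm:mainresult}.

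There is no serious obstacle here; the argument is essentially a definition chase. The only point to verify with some care is that the summand inclusion $\pi_X^*(T_X)\hookrightarrow T_{Y\times X}$ remains injective after applying $\iota^*$, which is automatic because pull-back of locally free sheaves preserves direct sums and injective homomorphisms of locally free sheaves.
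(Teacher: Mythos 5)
Your argument is correct and is exactly the definition chase the paper has in mind: the paper states Lemma~\ref{lem:brot} with no written proof, declaring it ``immediate from the construction,'' and your unwinding via $\pi_X\circ\iota=f$, functoriality of pull-back, and the split (direct-summand) inclusion $f^*(T_X)\hookrightarrow\iota^*(T_{Y\times X})$ is the intended justification. You also correctly flag the one point worth care --- that the summand inclusion stays injective after $\iota^*$, so the image really is carried isomorphically onto $\sF_Y$ --- so nothing is missing.
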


\begin{lem}\label{lem:defgraph}
  If $F_\iota: \Delta\times Y\to Y\times X$ is a deformation of the graph
  morphism $\iota$ along $\sG$, then $F := \pi_X\circ F_\iota : \Delta\times Y
  \to X$ is a deformation of $f$ along $\sF$. If $F_\iota$ is a lifting of
  $\sigma_\iota$, then $F$ is a lifting of $\sigma$. \qed
\end{lem}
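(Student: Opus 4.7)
The plan is to unpack the definitions and verify three things: that $F$ restricts to $f$ at $t=0$, that the velocity vector fields of $F$ at all times $t \in \Delta$ take values in $\sF$, and that $F$ lifts $\sigma$ when $F_\iota$ lifts $\sigma_\iota$. All three are essentially functorial consequences of how $\iota$, $\sigma_\iota$ and $\sG$ were set up in Lemmas~\ref{lem:kwieck} and \ref{lem:brot}, together with the chain rule applied to $F = \pi_X \circ F_\iota$.

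First, I would observe that since $F_\iota$ is a deformation of $\iota$, restriction to $\{0\}\times Y$ gives $F_\iota|_{\{0\}\times Y} = \iota$, and therefore $F|_{\{0\}\times Y} = \pi_X \circ \iota = f$, so $F$ is indeed a deformation of $f$ in the sense of Definition~\ref{def:defn}. For the lifting claim, a direct computation of the initial velocity vector field using that $\sigma_\iota = (0, \sigma)$ and $d\pi_X(0,\sigma) = \sigma$ gives $\sigma_{F,0} = d\pi_X \circ \sigma_{F_\iota,0} = d\pi_X \circ \sigma_\iota = \sigma$.

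The main point to verify is that $F$ is a deformation along $\sF$ in the sense of Notation~\ref{not:defalongf}, i.e.\ that for each $t\in \Delta$ the velocity vector field $\sigma_{F,t} \in H^0\bigl(Y,\,(F_t)^*(T_X)\bigr)$ lies in the image of $H^0\bigl(Y,\,(F_t)^*(\sF)\bigr)$. For this I would use the chain rule in the form $\sigma_{F,t} = d\pi_X \circ \sigma_{F_\iota, t}$, which holds because $\pi_X$ does not depend on $t$. By hypothesis $\sigma_{F_\iota,t}$ lies in the image of $H^0\bigl(Y,\,(F_{\iota,t})^*(\sG)\bigr)$. Now by the very construction $\sG = \{0\}\oplus \pi_X^*(\sF)$, and the differential $d\pi_X$ carries $\sG$ isomorphically onto $\sF$ (being the pull-back projection); explicitly, this is again the isomorphism $\iota^*(\sG) \cong f^*(\sF)$ from Lemma~\ref{lem:brot}, propagated to all fibers $\{t\}\times Y$ via the deformed maps $F_{\iota,t}$. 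Hence $d\pi_X \circ \sigma_{F_\iota,t}$ is a section of $(F_t)^*(\sF)$, as required.

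I do not expect a genuine obstacle; the work is purely bookkeeping, and the only care needed is to keep track of which pull-back bundle each section lives in, and to note that the decomposition $T_{Y\times X} = \pi_Y^*(T_Y)\oplus \pi_X^*(T_X)$ makes $d\pi_X$ behave correctly on $\sG$ fiberwise so that the ``image of $(F_t)^*(\sF)$'' condition is preserved uniformly in $t$.
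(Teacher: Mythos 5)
Your argument is correct and is exactly the bookkeeping the paper intends: the paper states this lemma with no written proof, declaring it ``immediate from the construction,'' and your verification via $F|_{\{0\}\times Y}=\pi_X\circ\iota=f$, the chain rule $\sigma_{F,t}=d\pi_X\circ\sigma_{F_\iota,t}$, and the identification of $(F_{\iota,t})^*(\sG)$ with $(F_t)^*(\sF)$ inside the splitting $T_{Y\times X}=\pi_Y^*(T_Y)\oplus\pi_X^*(T_X)$ is precisely the content of that assertion. Nothing further is needed.
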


In summary, Lemmas~\ref{lem:kwieck}--\ref{lem:defgraph} show that all
assumptions made in Theorem~\ref{thm:mainresult} also hold for the morphism
$\iota$, and that it suffices to find a lifting of $\sigma_\iota$ along
$\sG$. Without loss of generality, we can therefore maintain the following
assumption throughout the rest of the proof.

\begin{assumption}
  The morphism $f : Y\to X$ is a closed immersion.
\end{assumption}

\subsection{Step 2: Time dependent vector fields} 

The explicit computations of {\v C}ech cocycles that we will use in Step~3 of
this proof become rather complicated if the infinitesimal deformation $\sigma$
has zeros or if its associated tangent vectors are not transversal to $f(Y)
\subseteq X$. As in Section~\ref{subsec:immersion}, we avoid this problem by
enlarging $X$.

\begin{construction}\label{cons:eta}
  Set $Z:=X\times \C$, with projections $\pi_X : Z \to X$ and $\pi_\C : Z \to
  \C$. Throughout the remainder of the proof, the coordinate on $\C$ will be
  denoted by $t$ and referred to as ``time''. Using that the tangent bundle of
  $Z$ decomposes as a direct sum, we consider the sheaf
  $$
  \sG := \pi_X^*(\sF)\oplus\pi_\C^*(T_\C) \,\subseteq\, \pi_X^*(T_X) \oplus
  \pi_\C^*(T_\C) = T_Z,
  $$
  the inclusion map
  $$
  g : Y \to Z, \quad y \mapsto \bigl( f(y), 0 \bigr).
  $$
  and the infinitesimal deformation
  $$
  \eta \in H^0\bigl( Y,\, g^*(T_Z) \bigr), \quad \eta := \sigma + \frac{d}{dt}.
  $$
\end{construction}

As in Section~\ref{subsec:immersion}, the following is immediate from the
construction.

\begin{lem}\label{lem:defZ}
  The sheaf $\sG$ is closed under Lie bracket. If $G: \Delta\times Y\to Z$ is
  a deformation of the morphism $g$ along $\sG$, then $F := \pi_X \circ G :
  \Delta\times Y \to X$ is a deformation of $f$ along $\sF$. If the
  deformation $G$ is a lifting of $\eta$, then $F$ is a lifting of
  $\sigma$. \qed
\end{lem}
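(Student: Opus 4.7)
The plan is to verify the three assertions of the lemma separately; each follows directly from the product structure $Z = X \times \C$ and basic functorial properties of pushforward of vector fields, and none presents a real obstacle.

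First I would check that $\sG$ is closed under the Lie bracket. Working in local coordinates $(x_1,\dots,x_n,t)$ on $Z$, sections of $\pi_X^*(\sF)$ have the form $\sum a_i(x)\,\partial_{x_i}$ with no $t$-dependence, while sections of $\pi_\C^*(T_\C)$ have the form $b(t)\,\partial_t$. The Lie bracket of two sections of $\pi_X^*(\sF)$ again lies in $\pi_X^*(\sF)$ by \cite[Prop.~1.55]{Warner83} applied to $\pi_X$, using the involutivity of $\sF$; similarly for $\pi_\C^*(T_\C)$. The cross-bracket between $\sum a_i(x)\partial_{x_i}$ and $b(t)\partial_t$ vanishes identically, since the coefficients depend on disjoint sets of variables.

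Next, to show that $F = \pi_X \circ G$ is a deformation of $f$ along $\sF$, I would observe that $F|_{\{0\}\times Y} = \pi_X \circ g = f$ directly from the definition of $g$. For the ``along $\sF$'' condition, the differential $d\pi_X : T_Z \to \pi_X^*(T_X)$ sends the summand $\pi_X^*(\sF)$ of $\sG$ isomorphically onto $\pi_X^*(\sF) \subseteq \pi_X^*(T_X)$ and kills the summand $\pi_\C^*(T_\C)$. Hence for every $t \in \Delta$ one has $\sigma_{F,t} = d\pi_X \circ \sigma_{G,t}$, and whenever $\sigma_{G,t}$ factors through $(G_t)^*(\sG)$ its image factors through $(F_t)^*(\sF) \subseteq (F_t)^*(T_X)$, which is exactly condition \eqref{eq:dalngF} of Theorem~\ref{thm:mainresult}.

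Finally, for the lifting statement, I would just apply $d\pi_X$ to the equality $\sigma_{G,0} = \eta = \sigma + \tfrac{d}{dt}$: the summand $\tfrac{d}{dt}$ lies in $\pi_\C^*(T_\C)$ and is therefore annihilated, whereas $d\pi_X$ acts as the identity on the first summand, so $\sigma_{F,0} = \sigma$. The only bookkeeping to watch is the canonical identification $T_Z = \pi_X^*(T_X) \oplus \pi_\C^*(T_\C)$ when moving vector fields between $Z$ and $X$, but this is routine and requires no new ideas beyond those already recorded in the Construction~\ref{cons:eta} preceding the lemma.
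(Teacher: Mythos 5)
The paper offers no written proof of this lemma---it carries a \qed{} and is declared ``immediate from the construction''---so your job was to supply the routine verification, and your overall plan (treat the three assertions separately, exploit the splitting $T_Z = \pi_X^*(T_X)\oplus\pi_\C^*(T_\C)$ and the projection $d\pi_X$) is exactly the intended one. Your second and third parts are correct: $F|_{\{0\}\times Y}=\pi_X\circ g=f$; the chain rule gives $\sigma_{F,t}=d\pi_X\circ\sigma_{G,t}$; a global lift of $\sigma_{G,t}$ to $(G_t)^*(\sG)=(F_t)^*(\sF)\oplus(\pi_\C\circ G_t)^*(T_\C)$ projects to a global lift of $\sigma_{F,t}$ to $(F_t)^*(\sF)$, which is precisely condition \eqref{eq:dalngF}; and $d\pi_X(\eta)=d\pi_X(\sigma+\tfrac{d}{dt})=\sigma$.

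The one step that fails as written is your local description of sections of $\sG$ in the bracket-closure argument. The pullback $\pi_X^*(\sF)$ is an $\sO_Z$-module, so its local sections are $\sO_Z$-linear combinations $\sum_j a_j(x,t)\,\tilde E_j$ of lifts of local generators $E_j$ of $\sF$: the coefficients may depend on $t$. (This is the entire point of the paper's subsequent notion of a ``time dependent vector field in $\sF$'', defined as a section of $\pi_X^*(\sF)\oplus\{0\}$.) Likewise sections of $\pi_\C^*(T_\C)$ are of the form $b(x,t)\,\partial_t$. Consequently the cross-bracket does \emph{not} vanish identically; for instance $[\,t\tilde E_1,\ \partial_t\,]=-\tilde E_1\neq 0$. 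The conclusion survives, because $[\sum_j a_j\tilde E_j,\ b\,\partial_t]=\sum_j b(\partial_t a_j)\tilde E_j-\sum_j a_j(\tilde E_j b)\,\partial_t$ still lies in $\pi_X^*(\sF)\oplus\pi_\C^*(T_\C)$, and closure of $\pi_X^*(\sF)$ itself follows from involutivity of $\sF$ together with the Leibniz identity $[aV,bW]=ab[V,W]+a(Vb)W-b(Wa)V$ applied to the $\pi_X$-related generators $\tilde E_j$ (this is the \cite[Prop.~1.55]{Warner83} argument the paper already used for the graph construction in Step 1). You should replace ``the coefficients depend on disjoint sets of variables, hence the cross-bracket vanishes'' by this computation; everything else stands.
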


\begin{warning}
  If $\sG_Y \subseteq g^*(T_Z)$ denotes the image of $g^*(\sG)$ under the
  pull-back of the inclusion map, then $\sG_Y = \sF_Y \oplus \sO_Y$. It is
  therefore generally wrong that $H^1\bigl( Y,\, \sG_Y \bigr) = \{0\}$, and
  the assumptions of Theorem~\ref{thm:mainresult} will generally not hold for
  the morphism $g$. Rather than using cohomological vanishing for $\sG_Y$, the
  arguments given in Step 3 will therefore only use cohomological vanishing of
  $\sF_Y$ and the special form of $\eta$, in order to construct infinitesimal
  liftings of arbitrary order.
\end{warning}

The following special types of vector fields on $Z$ will play a role in the
computations.

\begin{defn}[Time dependent vector field]
  A vector field on $Z$ is called a \emph{time dependent vector field in
    $\sF$} if it is a section of the sheaf
  $$
  \pi_X^*(\sF) \oplus \{0 \} \subseteq \pi_X^*(\sF) \oplus \pi_\C^*(T_\C)
  \subseteq T_Z.
  $$
\end{defn}

\begin{defn}[Vector field with constant flow in time]
  A vector field $D$ on $Z$ is called a \emph{vector field in $\sG$ with
    constant flow in time} if it is of the form
  $$
  D = D' + \frac{d}{dt},
  $$
  where $D'$ is a time dependent vector field in $\sF$.
\end{defn}

We remark that the first-order infinitesimal deformation $\eta$ of
Construction~\ref{cons:eta} is induced by a vector field with constant flow in
time, in the sense of the following definition.

\begin{defn}[Infinitesimal deformations induced by vector fields]\label{def:idiss}
  An $n$th order infinitesimal deformation $\eta_n : Y \to g^* \Jet^n(Z) =
  \Jet^n(Z)|_Y$ of the closed immersion $g$ is \emph{induced by vector fields
    in $\sG$ with constant flow in time} if for every point $y \in Y$ there is
  a neighborhood $U = U\bigl(g(y)\bigr) \subseteq Z$ and a vector field $D \in
  H^0 \bigl( U,\, \sG \bigr)$ with constant flow in time, such that the
  restriction $\eta_n|_{U\cap Y}$ is given by the section $\tau^n_D|_{U\cap
    Y}$ discussed in Definition~\ref{def:jetvec}.
\end{defn}

In step 3 of the proof, we need to consider iterated Lie brackets of vector
fields with constant flow in time. We end this section with an elementary
observation, asserting that Lie brackets of time dependent vector fields, or
of vector fields with constant flow in time will always be time dependent.

\begin{lem}\label{lem:lietime1}
  Let $U \subseteq Z$ be any open set and let $D_1$ and $D_2$ be any two time
  dependent vector fields in $\sF$, defined on $U$. Then the following holds.
  \begin{enumerate}
  \item\label{il:camillo} The Lie bracket $[D_1, D_2]$ is a time dependent
    vector field in $\sF$.
  \item\label{il:peppone} The Lie bracket $[\frac{d}{dt}, D_1]$ is a time
    dependent vector field in $\sF$.
  \end{enumerate}
\end{lem}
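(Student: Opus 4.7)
The plan is to reduce both statements to a local computation, expressing the vector fields in a suitably chosen set of generators for $\pi_X^*(\sF)$ and then applying the standard Leibniz-type expansion for the Lie bracket.

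First, I would work on a small open set $U' \subseteq U$ chosen so that $\sF|_{\pi_X(U')}$ is generated by finitely many local sections $E_1, \ldots, E_s$. Any time dependent vector field in $\sF$ on $U'$ can then be written as
\[
D_i = \sum_j f_{i,j}\, \pi_X^*(E_j), \qquad f_{i,j} \in \sO_Z(U').
\]
The key observation is that the vector field $\pi_X^*(E_j)$ on $Z$ is $\pi_X$-related to $E_j$ on $X$, and that, in local coordinates $(x,t)$, it acts only in the $x$-directions with coefficients that are independent of $t$.

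For part~\eqref{il:camillo}, I would expand $[D_1, D_2]$ using bilinearity and the Leibniz rule $[fA, gB] = fg[A,B] + f(Ag)B - g(Bf)A$. The terms of the form $f(Ag)B$ and $g(Bf)A$ are $\sO_Z$-linear combinations of $\pi_X^*(E_j)$'s, hence sections of $\pi_X^*(\sF) \oplus \{0\}$. For the remaining terms $f_{1,j}f_{2,k}[\pi_X^*(E_j),\pi_X^*(E_k)]$, the fact that $\pi_X^*(E_j)$ and $\pi_X^*(E_k)$ are $\pi_X$-related to $E_j$ and $E_k$ gives $[\pi_X^*(E_j),\pi_X^*(E_k)] = \pi_X^*([E_j,E_k])$ by \cite[Prop.~1.55]{Warner83}, and by involutivity of $\sF$ the bracket $[E_j,E_k]$ is again a local section of $\sF$. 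Thus every term of $[D_1,D_2]$ lies in $\pi_X^*(\sF) \oplus \{0\}$, which gives~\eqref{il:camillo}.

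For part~\eqref{il:peppone}, the same Leibniz rule yields
\[
\left[\tfrac{d}{dt},\, f_{1,j}\,\pi_X^*(E_j)\right] = \tfrac{\partial f_{1,j}}{\partial t}\,\pi_X^*(E_j) + f_{1,j}\,\bigl[\tfrac{d}{dt},\, \pi_X^*(E_j)\bigr].
\]
Since $\tfrac{d}{dt}$ is $\pi_X$-related to the zero vector field on $X$ and $\pi_X^*(E_j)$ is $\pi_X$-related to $E_j$, a second application of \cite[Prop.~1.55]{Warner83} shows $[\tfrac{d}{dt},\pi_X^*(E_j)] = 0$; alternatively this follows directly from the fact that the coefficients of $\pi_X^*(E_j)$ do not depend on $t$. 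Summing over $j$, $[\tfrac{d}{dt}, D_1]$ is once more an $\sO_Z$-linear combination of the $\pi_X^*(E_j)$'s and hence a time dependent vector field in $\sF$.

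There is no serious obstacle; the only point that requires a moment's care is the verification that $\pi_X$-related vector fields have $\pi_X$-related brackets, which is precisely the content of \cite[Prop.~1.55]{Warner83} that has already been invoked in the proof of Corollary~\ref{cor:newfrob}. Gluing the local computations, which only involve $\sO_Z$-linear operations and the intrinsic Lie bracket, gives the two claims globally on $U$.
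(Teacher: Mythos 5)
Your proof is correct and follows essentially the same route as the paper, which likewise reduces everything to the elementary bracket computation for $\pi$-related vector fields in \cite[Prop.~1.55]{Warner83} (the paper phrases it via $\pi_{\C}$-relatedness to $0$, you carry out the Leibniz expansion in local generators explicitly and invoke involutivity of $\sF$ for the terms $[\pi_X^*(E_j),\pi_X^*(E_k)]$). The only cosmetic caveat is that $\pi_X$-relatedness alone controls just the $T_X$-component of a bracket, but your parenthetical coordinate argument (coefficients independent of $t$, no $\frac{d}{dt}$-component) closes that gap correctly.
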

\begin{proof}
  Assertion~(\ref{lem:lietime1}.\ref{il:camillo}) follows from an elementary
  computation, cf.~\cite[Prop.~1.55]{Warner83}, when one observes that a
  vector field in $\sG$ is a time dependent vector field in $\sF$ if and only
  if it is $\pi_\C$-related to the trivial vector field $0 \in H^0 \bigl(
  \C,\, T_\C \bigr)$. Observing that a vector field has constant flow in time
  if and only if it is $\pi_\C$-related to the vector field $\frac{d}{dt} \in
  H^0 \bigl( \C,\, T_\C \bigr)$, the same computation also gives
  (\ref{lem:lietime1}.\ref{il:peppone}).
\end{proof}

\begin{cor}\label{cor:lietime}
  Let $D_1 + \frac{d}{dt}$ and $D_2+\frac{d}{dt}$ be any two vector fields in
  $\sG$ with constant flow in time. If $n$ is any number, then the iterated
  Lie bracket $[D_1 +\frac{d}{dt} , D_2+\frac{d}{dt} ]^{(n)}$ is a time
  dependent vector field in $\sF$. \qed
\end{cor}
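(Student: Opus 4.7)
The plan is to proceed by induction on $n$, using only bilinearity of the Lie bracket together with the two statements of Lemma~\ref{lem:lietime1}. The point is that the iterated Lie bracket $[D_1+\frac{d}{dt},\, D_2+\frac{d}{dt}]^{(n)}$, although a priori a vector field in $\sG$, can be expanded into a sum of iterated brackets whose entries are either $D_1$, $D_2$, or $\frac{d}{dt}$, and each such summand will be visibly a time dependent vector field in $\sF$.

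For the base case $n=2$, I would expand by bilinearity:
$$
[D_1+\tfrac{d}{dt},\, D_2+\tfrac{d}{dt}] = [D_1,D_2] + [D_1,\tfrac{d}{dt}] + [\tfrac{d}{dt},D_2] + [\tfrac{d}{dt},\tfrac{d}{dt}].
$$
The last term vanishes, $[D_1,D_2]$ is a time dependent vector field in $\sF$ by Lemma~\ref{lem:lietime1}\eqref{il:camillo}, and the two mixed brackets are time dependent vector fields in $\sF$ by Lemma~\ref{lem:lietime1}\eqref{il:peppone} (together with antisymmetry of the Lie bracket, which preserves the class of time dependent vector fields in $\sF$ since that class is an $\sO_Z$-submodule of $T_Z$).

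For the inductive step, assuming $E_{n-1}:=[D_1+\frac{d}{dt},\, D_2+\frac{d}{dt}]^{(n-1)}$ is a time dependent vector field in $\sF$, I would again use bilinearity to write
$$
[D_1+\tfrac{d}{dt},\, D_2+\tfrac{d}{dt}]^{(n)} \;=\; [D_1,\, E_{n-1}] + [\tfrac{d}{dt},\, E_{n-1}].
$$
By the induction hypothesis and Lemma~\ref{lem:lietime1}\eqref{il:camillo}, the first bracket lies in the class of time dependent vector fields in $\sF$; by Lemma~\ref{lem:lietime1}\eqref{il:peppone}, so does the second. Their sum therefore has the same property, closing the induction.

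I do not anticipate any real obstacle here; the statement is a purely formal consequence of Lemma~\ref{lem:lietime1} and the bilinearity of the Lie bracket. The only point worth emphasizing is that ``time dependent vector field in $\sF$'' describes an $\sO_Z$-submodule of $T_Z$ that is stable under the operations $[D_1,\cdot\,]$ and $[\frac{d}{dt},\cdot\,]$, which is exactly the content of the two parts of Lemma~\ref{lem:lietime1} applied to the ingredients of the expanded bracket.
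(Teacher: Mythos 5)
Your proof is correct and is exactly the routine induction that the paper leaves implicit: the corollary is stated with a bare \qed because it is considered an immediate consequence of Lemma~\ref{lem:lietime1}, and your expansion by bilinearity plus antisymmetry, with the inductive step $[D_1+\tfrac{d}{dt},E_{n-1}]=[D_1,E_{n-1}]+[\tfrac{d}{dt},E_{n-1}]$, is precisely the intended argument. No gaps.
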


\subsection{Step 3: End of proof} 

The end of the proof of Theorem~\ref{thm:mainresult} is now very similar to
the proof of Theorem~\ref{thm:class}. First, we prove an analogue of
Lemma~\ref{lem:liftnnpo} that gives liftings of $\eta$ to arbitrary
order. These liftings will locally be induced by vector fields in $\sF$ with
constant flow in time. Finally, we apply Artin's result to construct the
required deformation of $f$. The universal properties of the space
$\Hom_{\sF}\bigl(Y,\, X \bigr)$, as spelled out in
Corollary~\ref{cor:analyticset}, will then guarantee that this is in fact a
deformation along the subsheaf $\sF$.

\begin{lem}\label{lem:liftalongf}
  Let $\eta_n : Y\to g^* \Jet^n(Z)$ be an $n$th order infinitesimal
  deformation of the closed immersion $g$ that is induced by vector fields in
  $\sG$ with constant flow in time. Then there exists a lifting $\eta_{n+1} :
  Y \to g^* \Jet^{n+1}(Z)$ of $\eta_n$ that is likewise induced by vector
  fields in $\sG$ with constant flow in time.
\end{lem}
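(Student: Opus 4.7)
My plan is to mimic the proof of Lemma~\ref{lem:liftnnpo} while working throughout with vector fields in $\sG$ of constant flow in time rather than with abstract sections of the affine bundle $g^*\Jet^{n+1}(Z) \to g^*\Jet^n(Z)$. The extra coordinate $t$ on $Z = X \times \C$ supplied by Construction~\ref{cons:eta} will play a central role: it gives a Taylor parameter transverse to $Y$ that allows me to adjust a vector field without disturbing the $n$-jet of its flow along $Y$. I will start by covering $Y$ by open subsets $\{U_\alpha\}$ of $Z$ and, using the hypothesis on $\eta_n$ in Definition~\ref{def:idiss}, choosing on each $U_\alpha$ a vector field $D_\alpha = D'_\alpha + \frac{d}{dt} \in H^0(U_\alpha, \sG)$ of constant flow in time with $\tau^n_{D_\alpha}|_{U_\alpha \cap Y} = \eta_n|_{U_\alpha \cap Y}$. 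Each section $\tau^{n+1}_{D_\alpha}|_{U_\alpha \cap Y}$ is then a local candidate for $\eta_{n+1}$.

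Since the $n$-jets agree on the double overlaps $U_{\alpha\beta} \cap Y$, Theorem~\ref{thm:dj} identifies their discrepancy at order $n+1$ as
\begin{equation*}
\nu_{\alpha\beta} \;:=\; \tau^{n+1}_{D_\beta}|_{U_{\alpha\beta} \cap Y} - \tau^{n+1}_{D_\alpha}|_{U_{\alpha\beta} \cap Y} \;=\; [D_\alpha, D_\beta]^{(n+1)}|_{U_{\alpha\beta} \cap Y}.
\end{equation*}
By Corollary~\ref{cor:lietime}, this iterated bracket is a time-dependent vector field in $\sF$, so under the splitting $g^*(T_Z)|_Y = f^*(T_X) \oplus \sO_Y \cdot \frac{d}{dt}$ it takes values in $\sF_Y$. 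The $\nu_{\alpha\beta}$ manifestly form a \v{C}ech $1$-cocycle, and the vanishing hypothesis $H^1(Y, \sF_Y) = \{0\}$ from Theorem~\ref{thm:mainresult} furnishes $\lambda_\alpha \in H^0(U_\alpha \cap Y, \sF_Y)$ with $\lambda_\alpha - \lambda_\beta = \nu_{\alpha\beta}$. After shrinking the cover, I lift each $\lambda_\alpha$ to a section of $f^*(\sF)|_{U_\alpha \cap Y}$ and extend it to a time-dependent vector field $\tilde\lambda_\alpha \in H^0(U_\alpha, \pi_X^*(\sF))$.

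The geometric heart of the argument is then the adjustment
\begin{equation*}
D_\alpha^\flat \;:=\; D_\alpha \,+\, \frac{t^n}{n!}\, \tilde\lambda_\alpha \;\in\; H^0(U_\alpha, \sG),
\end{equation*}
which is still of constant flow in time because the addend is time dependent in $\sF$. Since $Y \subset Z = X \times \C$ is cut out by $t = 0$, this addend vanishes to order $n$ along $Y$; a short computation with the defining ODE \eqref{eq:ivp} then shows that the integral curves of $D_\alpha^\flat$ and $D_\alpha$ through any $y \in U_\alpha \cap Y$ agree through order $n$, while their $(n+1)$st Taylor coefficients differ by precisely $\tilde\lambda_\alpha(y)$; equivalently,
\begin{equation*}
\tau^n_{D_\alpha^\flat}|_{U_\alpha \cap Y} = \tau^n_{D_\alpha}|_{U_\alpha \cap Y}, \qquad \tau^{n+1}_{D_\alpha^\flat}|_{U_\alpha \cap Y} = \tau^{n+1}_{D_\alpha}|_{U_\alpha \cap Y} + \lambda_\alpha.
\end{equation*}
Combined with the cocycle identity $\lambda_\alpha - \lambda_\beta = \nu_{\alpha\beta}$, the modified sections $\tau^{n+1}_{D_\alpha^\flat}|_{U_\alpha \cap Y}$ now agree on overlaps and glue to a global section $\eta_{n+1} : Y \to g^* \Jet^{n+1}(Z)$ lifting $\eta_n$ and pointwise induced by vector fields in $\sG$ of constant flow in time.

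The hardest part of the proof, and the step where I expect the main obstacle, is producing a single modification of $D_\alpha$ that simultaneously (a) lies in $\sG$, (b) has constant flow in time, (c) preserves the $n$-jet of the flow along $Y$, and (d) shifts the $(n+1)$-jet by exactly $\lambda_\alpha$. The ansatz $\frac{t^n}{n!}\tilde\lambda_\alpha$ secures all four properties at once, and it works precisely because the coordinate $t$ is a global defining function cutting out $Y$ transversally in $Z = X \times \C$; this, in retrospect, is the whole reason for the enlargement in Construction~\ref{cons:eta}.
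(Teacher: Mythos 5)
Your proof is correct and takes essentially the same route as the paper's: the same local sections $\tau^{n+1}_{D_\bullet}$, the same identification of the overlap discrepancies as iterated Lie brackets via Theorem~\ref{thm:dj} and Corollary~\ref{cor:lietime}, the same {\v C}ech argument using $H^1\bigl(Y,\,\sF_Y\bigr)=\{0\}$, and the identical key ansatz of perturbing by $\frac{t^n}{n!}$ times an extension of $\lambda_\alpha$. The only difference is organizational: the paper first glues the translated jet sections $\tau_i - \lambda_i$ and afterwards exhibits the inducing vector field $D_i - \frac{t^n}{n!}E$, whereas you modify the vector fields first and then glue.
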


\begin{proof}
  As a first step, we construct liftings locally. Using the cohomological
  vanishing for $\sF_Y$, we can then correct the local liftings, to ensure
  that they glue on overlaps. This will define a global lifting, which is then
  shown to be induced by vector fields in $\sG$ with constant flow in time.

  It follows from Definition~\ref{def:idiss} that there exists an acyclic
  covering of $g(Y) \subseteq Z$ by open subsets $(U_i)_{i \in I} \subseteq Z$
  such that there are time dependent vector fields $D_i \in H^0 \bigl(U_i,\,
  \pi_X^*(\sF) \oplus \{0\} \bigr)$ that satisfy $\eta_n|_{U_i\cap Y} =
  \tau^n_{D_i+\frac{d}{dt}}|_{U_i\cap Y}$. We consider the induced section of
  the $(n+1)$st jet bundle,
  $$
  \tau_i := \tau^{n+1}_{D_i+\frac{d}{dt}}\bigr|_{U_i\cap Y} : U_i\cap Y \to
  \Jet^{n+1}(Z).
  $$
  Obviously, the $\tau_i$ are local liftings of $\eta_n$, but they do not
  necessarily glue on overlaps. However, it follows from Theorem~\ref{thm:dj}
  that for any pair of indices $i$, $j \in I$, the affine differences are
  given by iterated Lie brackets,
  \begin{align*}
    \nu_{i,j} & := \tau_i\bigr|_{U_i\cap U_j\cap Y} - \tau_j\bigr|_{U_i\cap
      U_j\cap Y} \\ & \,\,\, = \underbrace{\left[D_i+ \frac{d}{dt}
        \Bigr|_{U_i\cap U_j} ,\,\, D_j+ \frac{d}{dt} \Bigr|_{U_i\cap
          U_j}\right ]^{(n+1)}}_{=: A_{i,j}} \Biggr|_{U_i\cap U_j\cap Y.}
  \end{align*}
  Corollary~\ref{cor:lietime} asserts that the iterated Lie brackets $A_{i,j}$
  are time-dependent vector fields in $\sF$. The differences $\nu_{i,j}$
  therefore yield cohomology classes in $H^1\bigl(Y,\, \sF_Y \bigr)$ which are
  zero by assumption. We can thus find sections $\lambda_i \in H^0 \bigl( U_i
  \cap Y,\, \sF_Y \bigr)$ such that $\lambda_i - \lambda_j = \nu_{i,j}$. As in
  the proof of Lemma~\ref{lem:liftnnpo}, viewing the $\lambda_i$ as sections
  in $H^0 \bigl( U_i \cap Y,\, \sF_Y \oplus \sO_Y \bigr) = H^0 \bigl( U_i \cap
  Y,\, \sG_Y \bigr) \subseteq H^0 \bigl( U_i \cap Y,\, g^*(T_Z) \bigr)$, the
  sections obtained by translation,
  $$
  \tau_i - \lambda_i : U_i\cap Y \to \Jet^{n+1}(Z)|_{U_i \cap Y},
  $$
  glue on overlaps $U_i\cap U_j\cap Y$ and define a lifting to $n+1$st order,
  \begin{equation}\label{eq:archimedes}
    \eta_{n+1}: Y \to \Jet^{n+1}(Z), \text{\quad with \quad} \eta_{n+1}|_{
      U_i \cap Y} = \tau_i - \lambda_i \text{ for all $i$.}
  \end{equation}

  It remains to show that $\eta_{n+1}$ is an infinitesimal deformation induced
  by vector fields in $\sG$ with constant flow in time. To check the
  conditions of Definition~\ref{def:idiss}, let $y \in Y$ be any point, and
  let $i \in I$ be any index with $g(y) \in U_i$. Then it suffices to
  construct a time dependent vector field $D \in H^0 \bigl(U_i,\, \pi_X^*(\sF)
  \bigr)$ such that $\eta_{n+1}|_{U_i \cap Y} =
  \tau^{n+1}_{D+\frac{d}{dt}}|_{U_i \cap Y}$.

  To this end, consider the sections $\tau_i$ and $\lambda_i$ defined
  above. Recall that $\tau_i$ is induced by the vector field
  $D_i+\frac{d}{dt}$. Since covering of $Z$ is acyclic, the section $\lambda_i
  \in H^0 \bigl(U_i \cap Y,\, \sF_Y \bigr)$ is given as the restriction of a
  vector field $E \in H^0 \bigl( U_i,\, \pi_X^*(\sF) \bigr)$.  Set $D := (D_i
  - \frac{t^n}{n!}E)$. With Theorem~\ref{thm:dj} at hand, it is then easy to
  compute the affine differences of $\tau^{n+1}_{D+\frac{d}{dt}}|_{U_i \cap
    Y}$ and $\tau^{n+1}_{D_i+\frac{d}{dt}}|_{U_i \cap Y}$ on $U_i$ as
  $$
  \tau^{n+1}_{D+\frac{d}{dt}}|_{U_i \cap Y} -
  \underbrace{\tau^{n+1}_{D_i+\frac{d}{dt}}|_{U_i \cap Y}}_{= \tau_i} = -E =
  -\lambda_i.
  $$
  We obtain $\tau^{n+1}_{D+\frac{d}{dt}}|_{U_i \cap Y} = \tau_i - \lambda_i$
  and Equation~\eqref{eq:archimedes} then gives
  $\tau^{n+1}_{D+\frac{d}{dt}}|_{U_i \cap Y} = \eta_{n+1}|_{U_i \cap Y}$, as
  required.
\end{proof}

\begin{proof}[Proof of Theorem~\ref{thm:mainresult}, end of proof]
  Consider the analytic subset $\Hom_\sG(Y,Z)$ of the Douady space $\Hom(Y,Z)$
  constructed in Corollary~\ref{cor:analyticset} and the sequence of liftings
  $\eta_1$, $\eta_2$, $\ldots$ of Lemma~\ref{lem:liftalongf}. By
  Proposition~\ref{prop:infdefn}, we can view the $\eta_i$ as morphisms $\Spec
  \mathbb C[\varepsilon]/(\varepsilon^{i+1}) \times Y \to Z$.
  Assertion~(\ref{cor:analyticset}.\ref{il:laurel}) of
  Corollary~\ref{cor:analyticset} then implies that these morphisms factor via
  $\Hom_\sG(Y,Z)$, for each $i$.

  Arguing as in the proof of Theorem~\ref{thm:class}, only replacing
  $\Hom(Y,Z)$ by the analytic subspace $\Hom_\sG(Y,Z)$, Artin's Theorem
  \cite[Thm.~1.2]{ARTIN68} guarantees the existence of a deformation $G$ of
  $g$ that factors via $\Hom_\sG(Y,Z)$ and lifts the infinitesimal deformation
  $\eta$. Lemma~\ref{lem:defZ} and
  Assertion~(\ref{cor:analyticset}.\ref{il:hardy}) of
  Corollary~\ref{cor:analyticset} then implies that $F=\pi_X\circ G$ is in
  fact a deformation along $\sF$ that lifts the infinitesimal deformation
  $\sigma$.
\end{proof}

\providecommand{\bysame}{\leavevmode\hbox to3em{\hrulefill}\thinspace}
\providecommand{\MR}{\relax\ifhmode\unskip\space\fi MR}
% \MRhref is called by the amsart/book/proc definition of \MR.
\providecommand{\MRhref}[2]{%
  \href{http://www.ams.org/mathscinet-getitem?mr=#1}{#2}
}
\providecommand{\href}[2]{#2}

\end{document}